\numberwithin{equation}{section}
\newtheorem{theorem}{Theorem}[section]
\newtheorem{lemma}[theorem]{Lemma}
\theoremstyle{definition}
\newtheorem{definition}[theorem]{Definition}
\newtheorem{remark}[theorem]{Remark}
\newtheorem{conjecture}{Conjecture}
\def\abs#1{\left|#1\right|}
\date{}
\def\R{\mathbb{R}}
\def\O{\mathbb{O}}
\newcommand{\bfe} {{\mathbf e}}
\begin{document}
\title[Gilbert's conjecture ]{Resolving Gilbert's Conjecture: Dimensional Dependencies in Hardy Spaces Valued in Clifford Modules}
\date{\today}

\author{Yong Li}
\address{School of Mathematics and Statistics, Anhui Normal University, Wuhu 241002, Anhui, People's Republic of China.}
\email{leeey@ahnu.edu.cn}

\author{Guangbin Ren}
\address{%
 School of Mathematical Sciences, University of Science and Technology of China,Hefei, Anhui 230026, People's Republic of China.}
\email{rengb@ustc.edu.cn}

\thanks{Yong Li is supported by University Annual Scientific Research Plan of Anhui Province(2022AH050175).
Guangbin Ren is supported  by the National Natural Science Foundation of China (Grant Nos.12171448)}

\subjclass[2020]{Primary  30G35, 30H10 ; Secondary 42B30, 42B35, 30A05}
\keywords{Clifford analysis, Hardy space, Gilbert's conjecture, Clifford module, Riesz transform, Octonions}

	\begin{abstract}
	This article provides a thorough investigation into Gilbert's Conjecture, pertaining to Hardy spaces in the upper half-space valued in Clifford modules. We explore  the conjecture proposed by Gilbert in 1991, which seeks to extend the classical principle of representing real 
$L^p$
functions on the real line as boundary values of Hardy holomorphic functions to higher-dimensional Euclidean spaces valued in any Clifford module. We present a complete  resolution to this conjecture, demonstrating that its validity is contingent upon the dimension $n$, 
specifically holding true when \(n \not\equiv 6, 7 \mod 8\)
and failing otherwise.
The pivotal discovery that Gilbert's conjecture can be reformulated as a set of algebraic conditions is underscored in this work. To navigate these conditions, we employ a novel strategy that leverages the octonions, revealing their instrumental role in addressing issues related to Clifford modules and spinors. This innovative approach not only provides explicit realization through the generalization of the Hilbert transform to the Riesz transform but also establishes a significant advancement in the understanding of Hardy spaces within higher dimensions.	\end{abstract}

\maketitle
\tableofcontents
\section{Introduction}
 	Hardy space theory elegantly bridges real \(L^p\) spaces with analytic domains, establishing an isomorphism
	\[H^p(\mathbb{C}_+, \mathbb{C}) = L^p(\mathbb{R}, \mathbb{R})\]
	for \(p>1\), with \(\mathbb{C}_+\) representing the upper half-plane. This isomorphism implies that for any real function \(f \in L^p(\mathbb{R}, \mathbb{R})\), there exists a complex counterpart whose imaginary part is derived from the Hilbert transform, manifesting as the non-tangential boundary limit of a holomorphic function in \(H^p(\mathbb{C}_+, \mathbb{C})\). The Hardy holomorphic extension of any complex signal is, in turn, characterized by the Cauchy integral.

	When extending Hardy space theory into higher dimensions, one is led to conjecture an isomorphism
\[H^p(\mathbb{R}^{n+1}_+, C\ell_{n}) \cong L^p(\mathbb{R}^{n}, C\ell_{n-1}),\]
with the homomorphic function theory expanded to \(\mathbb{R}^{n+1}\) via Clifford analysis
\cite{BDS82,Gilbert,F.S, DGK91,GM88}.
 This expansion utilizes the Dirac operator
\[\sum_{j=0}^{n}\mathbf{e}_j\frac{\partial }{\partial x_j},\]
where \(\mathbf{e}_0, \mathbf{e}_1, \ldots, \mathbf{e}_n\) form the canonical basis of \(\mathbb{R}^{n+1}\).

More generally, 	by considering any Clifford module \(\mathfrak{H}\) and its subspace $\mathfrak{H}_0\) with the decomposition $$\mathfrak{H} = \mathfrak{H}_0 \oplus \eta \mathfrak{H}_0$$ for some \(\eta \in \mathbb{R}^n\) where \(\eta^2 = -1\), Gilbert conjectured the isomorphism
	\[H^p(\mathbb{R}^{n+1}_+, \mathfrak{H}) \cong L^p(\mathbb{R}^{n}, \mathfrak{H}_0).\]
	
	This paper aims to resolve Gilbert's conjecture, revealing a surprising dependency on the dimensionality of the space: the conjecture holds when \(n \not\equiv 6, 7 \mod 8\) and fails otherwise.
	
	We provide an explicit realization of this isomorphism through the generalization of the Hilbert transform to the Riesz transform. Further details are explored in the subsequent sections.

\subsection{Real \(H^p\)-Theory for Complex Variables}\label{op-boun-Cauchy}
Hardy spaces
 play a fundamental role in various areas including complex analysis, harmonic analysis, operator theory, complex dynamics,  analytic number theory,  and partial differential equations.   The development and study of Hardy spaces have deeply influenced the understanding of function theory and have found applications in signal processing and  control theory.

In 1915,   Hardy \cite{Hardy15} laid the groundwork for classical \(H^p\) theory, examining spaces of functions that are analytic in the upper half-plane \(\mathbb{C}_+\), characterized by the norm
\[\|F\|_{H^p(\mathbb{C}_+)} = \sup_{t > 0} \left(\int_{\mathbb{R}} |F(x + \mathrm{i}t)|^p \, \mathrm{d}x\right)^{1/p} < \infty.\]

 Hardy's original motivation stemmed from his interest in the boundary behavior of analytic functions.
  Research on Hardy spaces has always been a hot topic in mathematical studies \cite{SW60,FS72,QR96,BDQ18,GTM226,BKN18}.

A fundamental aspect of \(H^p\) theory is that functions within this space can be uniquely identified by the real part of their boundary values on the non-tangential (n.t.) approach. This property is crucial for applications in signal analysis where signals are represented as functions in \(L^p(\mathbb{R}, \mathbb{R})\).

Central to this theory is the correlation between a holomorphic function and its boundary values, particularly determined through n.t. limits. This is formalized in the following theorem:

\begin{theorem}
	Let \(F \in H^p(\mathbb{C}_+), p > 1\). Then, there exists a function \(f \in L^p(\mathbb{R}, \mathbb{C})\) satisfying:
	\begin{enumerate}
		\item \(\lim \limits_{z \to x \, \text{n.t.}} F(z) = f(x)\) for almost every \(x \in \mathbb{R}\),
		\item \(\lim \limits_{t \to 0} \int_{-\infty}^{+\infty} |F(x + \mathrm{i}t) - f(x)|^p \, \mathrm{d}x = 0.\)
	\end{enumerate}
\end{theorem}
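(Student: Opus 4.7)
The plan is to follow the classical three-step route for $H^p$ boundary value theorems: extract a weak limit of the horizontal slices, identify the limit as the boundary trace by showing that $F$ is a Poisson integral, and then invoke the standard convergence theorems for Poisson integrals of $L^p$ functions.

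First, by the very definition of the $H^p(\mathbb{C}_+)$ norm, the family $\{F_t : t>0\}$ given by $F_t(x) = F(x+\mathrm{i}t)$ is bounded in $L^p(\mathbb{R},\mathbb{C})$. Since $p>1$, the space $L^p(\mathbb{R},\mathbb{C})$ is reflexive, so the Banach--Alaoglu theorem supplies a sequence $t_n \downarrow 0$ and some $f \in L^p(\mathbb{R},\mathbb{C})$ with $F_{t_n} \rightharpoonup f$ weakly. Standard lower semicontinuity gives $\|f\|_{L^p} \leq \|F\|_{H^p(\mathbb{C}_+)}$.

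Second, I would establish the Poisson representation
\[
F(x+\mathrm{i}t) \;=\; (P_t * f)(x), \qquad t>0,
\]
where $P_t(x) = \frac{1}{\pi}\,\frac{t}{x^2+t^2}$. The starting point is the identity $F(x+\mathrm{i}(t+s)) = (P_t * F_s)(x)$ valid for all $s,t>0$: one proves it by noting that $P_t*F_s$ is harmonic on $\{\Im z>0\}$ with boundary values $F_s \in L^p$, comparing it with $F(\,\cdot + \mathrm{i}s)$ on the shifted half-plane, and invoking uniqueness of harmonic extensions with $L^p$ boundary data (or, equivalently, a Phragm\'en--Lindel\"of argument applied to the harmonic difference). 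Taking $s = t_n$ and letting $n\to\infty$, the weak convergence $F_{t_n}\rightharpoonup f$ tested against the kernel $P_t(x-\cdot) \in L^{p'}(\mathbb{R})$ yields the displayed formula.

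Third, once $F = P_t * f$ on $\mathbb{R}^2_+$, both conclusions follow from standard Poisson-integral theory. The pointwise non-tangential limit in (1) is the classical result that the Poisson integral of an $L^p$ function converges non-tangentially almost everywhere to its boundary data: one controls the non-tangential maximal function by the Hardy--Littlewood maximal operator $Mf$, which is bounded on $L^p$ for $p>1$, and then passes from continuous compactly supported functions (where convergence is immediate) by a density argument. The $L^p$ convergence in (2) is the approximate-identity property $\|P_t * f - f\|_{L^p} \to 0$ as $t\to 0$, which follows from continuity of translation on $L^p$ combined with $\int P_t = 1$.

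The main obstacle is step two, the identification of $F$ as the Poisson integral of its weak boundary trace. The subtlety is that $F$ is only known to be holomorphic in the open half-plane with uniform $L^p$ control on horizontal lines; ruling out the addition of a harmonic function with vanishing $L^p$ boundary trace (the typical obstruction being something like a constant or a multiple of $\Im z$) requires the uniqueness assertion for $H^p$ extensions, which in turn relies on the growth restriction built into the $H^p$ norm. Once this identification is in hand, the remaining work is routine real-variable harmonic analysis and does not use complex analyticity.
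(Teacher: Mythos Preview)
The paper does not actually prove this theorem: it is stated in Subsection~1.1 as a classical background result (and its Clifford-module analogue, Theorem~1.2, is simply cited from Gilbert's book), so there is no ``paper's own proof'' to compare against. Your outline---weak-$L^p$ compactness of the slices, identification of $F$ as the Poisson integral of the weak limit, then the standard approximate-identity and nontangential-maximal-function arguments---is the canonical route and is correct as sketched; your diagnosis that the Poisson representation is the only step requiring genuine care (via a uniqueness/Phragm\'en--Lindel\"of argument exploiting the uniform $L^p$ bound on slices) is accurate.
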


This result motivates the definition of the boundary operator $$\mathcal{B}: H^p(\mathbb{C}_+) \to L^p(\mathbb{R}, \mathbb{C}),$$  mapping each Hardy function \(F \in H^p(\mathbb{C}_+)\) to its boundary function \(f = \mathcal{B}(F)\).

Furthermore, classical Hardy space theory establishes that the Cauchy integral operator \(\mathcal{C}\) acts as the left inverse to \(\mathcal{B}\), i.e.,
\[\mathcal{C}\mathcal{B} = Id_{H^p(\mathbb{C}_+)}.\]
The Cauchy integral operator is defined as
\[\mathcal{C}(f)(z) = \frac{1}{2\pi \mathrm{i}} \int_{-\infty}^{+\infty} \frac{f(t)}{t - z} \, \mathrm{d}t, \quad z \in \mathbb{C} \setminus \mathbb{R}.\]

However, \(\mathcal{C}\) does not serve as the true inverse of \(\mathcal{B}\). Utilizing the Hilbert transform \(H\), we find
\[\mathcal{B}\mathcal{C}(f) = \frac{1}{2}(f + \mathrm{i}Hf),\]
where \(  H\) is defined as
\[ H(f)(x) = \text{p.v.}\frac{1}{\pi} \int_{-\infty}^{+\infty} \frac{f(y)}{x - y} \, \mathrm{d}y.\]

Since
$$ f=\mathcal B F=\mathcal B \mathcal C\mathcal B F =\mathcal B \mathcal C f= \frac{1}{2}(f + \mathrm{i} Hf),$$
   the effective range of the boundary operator \(\mathcal{B}\) is captured by
\[\mathcal{B}(H^p(\mathbb{C}_+)) = \{f \in L^p(\mathbb{R}, \mathbb{C}) : f = \mathrm{i} Hf\}. \]
  We define this particular subset of functions as \(\mathbf{R}H^p(\mathbb{C}_+)\), referred to as the Real \(H^p\) space. Within this framework, the Cauchy integral operator \(\mathcal{C}\), when applied to \(\mathbf{R}H^p(\mathbb{C}_+)\), acts as the precise inverse of \(\mathcal{B}\). This leads to the isomorphism
\begin{equation}\label{eq:RealHp}
  H^p(\mathbb{C}_+) \cong \mathbf{R}H^p(\mathbb{C}_+),
\end{equation}
for \(p > 1\), effectively bridging the Hardy spaces over the complex upper half-plane with their real-valued counterparts on the boundary.

Now we consider the operator
\[\mathcal{R}: L^p(\mathbb{R}, \mathbb{C}) \to L^p(\mathbb{R}, \mathbb{R}), \quad \mathcal{R}(f) = 2\,\text{Re}(f).\]
It  ensures that the sequence of mappings
\[L^p(\mathbb{R}, \mathbb{R}) \hookrightarrow L^p(\mathbb{R}, \mathbb{C}) \xrightarrow{\mathcal{C}} H^p(\mathbb{C}_+) \xrightarrow{\mathcal{B}} L^p(\mathbb{R}, \mathbb{C}) \xrightarrow{\mathcal{R}} L^p(\mathbb{R}, \mathbb{R})\]
results in the identity operation on \(L^p(\mathbb{R}, \mathbb{R})\). Similarly,
\[H^p(\mathbb{C}_+) \xrightarrow{\mathcal{B}} L^p(\mathbb{R}, \mathbb{C}) \xrightarrow{\mathcal{R}} L^p(\mathbb{R}, \mathbb{R}) \hookrightarrow L^p(\mathbb{R}, \mathbb{C}) \xrightarrow{\mathcal{C}} H^p(\mathbb{C}_+)\]
establishes the identity on \(H^p(\mathbb{C}_+)\). Consequently, we define
\[\mathbf{R}H^p(\mathbb{R}) = \mathcal{R} \mathcal{B} H^p(\mathbb{C}_+)\]
as the collection of real functions \(f\), where \(f = \text{Reg}\), and \(g = \mathcal{B}(F)\) for some \(F \in H^p(\mathbb{C}_+)\), corresponding to the real parts of the non-tangential limits from the Hardy space. Thus, we establish the isomorphism
\begin{equation}\label{eq:RealHpR}
  H^p(\mathbb{C}_+) \cong \mathbf{R}H^p(\mathbb{R}) = L^p(\mathbb{R}, \mathbb{R}),
\end{equation}
for \(p > 1\), highlighting the intrinsic connection between Hardy spaces on the upper half-plane and real-valued \(L^p\) functions.

\subsection{Hardy Spaces in Harmonic Analysis}
A major advancement in the theory of Hardy spaces came with the work of  Stein and  Weiss in the 1960s \cite{SW60}, who extended the concept of Hardy spaces to high  dimensions and developed tools for their analysis.

In the expanded framework of Hardy theory within the upper half-space $\mathbb{R}^{n+1}_+$, the concept is articulated through the Riesz system \cite{SW71, FS72, S93, Krantz23}. The Riesz system is delineated for a vector-valued function $F=(u_0,u_1, u_2, \ldots, u_n)$ by a set of equations:
\begin{align*}
&	\sum_{j=0}^{n}\frac{\partial u_j}{\partial x_j}= 0, \\
&	\frac{\partial u_i}{\partial x_j} = \frac{\partial u_j}{\partial x_i}
\end{align*}
for every $i,j = 0,1,2,\ldots,n$. A function $F$ is
is said to belong to the Hardy space     if it meets the criteria of the Riesz system and fulfills the condition:
\[
\sup_{y>0}\int_{\mathbb{R}^n}\left|F(x,y)\right|^p \mathrm{d}x   < +\infty.
\]

The Riesz system can be effectively understood within the framework of Clifford algebras. When viewing $F=(u_0,u_1, u_2, \ldots, u_n)$ as a Clifford-valued function $f=\sum_{i=0}^{n}u_i\mathbf{e}_i$, with its domain in $\mathbb{R}^{n+1}$, $f$ adheres to the Riesz system upon being nullified by the Dirac operator:
\[
\sum_{j=0}^{n}\mathbf{e}_j\frac{\partial}{\partial x_j}f = \sum_{j=0}^{n}\mathbf{e}_j\frac{\partial}{\partial x_j}\sum_{i=0}^{n}u_i\mathbf{e}_i = \sum_{j=0}^{n}\frac{\partial u_j}{\partial x_j} + \sum_{0\leqslant i<j\leqslant n}\left(\frac{\partial u_i}{\partial x_j}-\frac{\partial u_j}{\partial x_i}\right)\bfe_j\bfe_i = 0.
\]
Inversely, if $F$ is a Clifford-valued function within $\mathbb{R}^{n+1}$ and is extinguished by the Dirac operator, then $F$ complies with the Riesz system.

Moreover, the Hardy space correspondent to the Riesz system is characterized as the space of vector-valued functions that are extinguished by the Dirac operator, thereby suggesting that Clifford analysis provides a coherent framework for the extension of $H^p$ theory into high  dimensions.

\subsection{Hardy Spaces in Clifford Analysis}

The development of Clifford analysis has significantly enriched the theory of Hardy spaces structured over Clifford modules \cite{QR96,GM88,Gilbert}. This advancement incorporates the consideration of operators within the context of Clifford algebras.

Given a Clifford module $\mathfrak{H}$ (as outlined in Definition \ref{def:CM}), a function $F$ is said to reside in the Hardy space $H^p(\mathbb{R}^n_+, \mathfrak{H})$ if it is Clifford-analytic in the upper half-space
$$\mathbb{R}^n_+ = \{(t, \underline{x}): t > 0, \underline{x} \in \mathbb{R}^{n-1}\}$$
and satisfies the norm condition
$$\|F\|_{H^p(\mathbb{R}^n_+, \mathfrak{H})} = \sup_{t > 0} \left(\int_{\mathbb{R}^{n-1}} |F(t, \underline{x})|^p \, \mathrm{d}x\right)^{1/p} < \infty.$$

To elucidate the relationship between Clifford-analytic functions and their boundary values for $p > \frac{n-2}{n-1}$, we establish that each function in $H^p(\mathbb{R}^n_+, \mathfrak{H})$ possesses non-tangential limits almost everywhere through the following theorem:

\begin{theorem}\cite[P.120 Theorem 5.4]{Gilbert}\label{thm:bdv}
	Let $F \in H^p(\mathbb{R}^n_+, \mathfrak{H}), p > \frac{n-2}{n-1}$. There exists a function $f \in L^p(\mathbb{R}^{n-1}, \mathfrak{H})$ such that:
	\begin{enumerate}
		\item $\lim\limits_{z \to x \, \text{n.t.}} F(z) = f(x)$ for almost all $x \in \mathbb{R}^{n-1}$,
		\item $\lim\limits_{t \to 0} \int_{\mathbb{R}^{n-1}} |F(t, \underline{x}) - f(x)|^p \, \mathrm{d}x = 0$.
	\end{enumerate}
\end{theorem}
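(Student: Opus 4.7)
The plan is to follow the classical Fatou--Poisson scheme for Hardy-space boundary values, adapted to Clifford-analytic functions. The decisive analytic input is that for any solution $F$ of the Dirac equation on $\mathbb{R}^n_+$, the scalar function $\abs{F}^q$ is subharmonic precisely when $q \geq \frac{n-2}{n-1}$. This Stein--Weiss type threshold is exactly what forces the hypothesis $p > \frac{n-2}{n-1}$ in the statement.

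First, I would use the subharmonicity of $\abs{F}^p$ together with the Hardy bound $\sup_{t>0}\|F(t,\cdot)\|_{L^p} < \infty$ to show that for each fixed $s>0$ the shifted function $F_s(t,\underline{x}) := F(t+s,\underline{x})$ is dominated by the Poisson extension of $\abs{F(s,\cdot)}^p$; explicitly,
\[
\abs{F(t+s,\underline{x})}^p \leq \bigl(P_t \ast \abs{F(s,\cdot)}^p\bigr)(\underline{x}),
\]
where $P_t$ denotes the Poisson kernel on $\mathbb{R}^{n-1}$. A parallel Cauchy reproducing formula on horizontal slices then represents $F$ itself as a Clifford-valued Cauchy integral of its restriction to the slice $\{t=s\}$, which is the key reproducing property to be exploited in the limit $s\to 0^+$.

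To construct the boundary function $f$: when $p>1$ I would invoke weak-$*$ compactness of bounded sets in $L^p(\mathbb{R}^{n-1},\mathfrak{H})$ to extract a sequence $s_k \to 0^+$ with $F(s_k,\cdot) \rightharpoonup f$, then pass to the weak limit in the Cauchy reproducing formula to identify $f$ as the unique boundary trace. For $\frac{n-2}{n-1}<p\leq 1$, duality fails and one must instead work with the subharmonic function $\abs{F}^p\in L^1$: extract a boundary measure, verify that it is absolutely continuous with density $\abs{f}^p$ for some $\mathfrak{H}$-valued $f$, and recover $f$ through the Cauchy integral. The non-tangential convergence in (1) then follows from the standard Fatou-type domination of $F$ on approach cones by a Hardy--Littlewood maximal function of $\abs{f}$, combined with Lebesgue differentiation, and the $L^p$ convergence in (2) follows from dominated convergence using the maximal-function majorant and the pointwise convergence just obtained.

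The principal obstacle is establishing the sharp subharmonicity exponent $q \geq \frac{n-2}{n-1}$ for $\abs{F}^q$. The argument rests on a refined Kato-type identity: one computes $\Delta \abs{F}^2 = 2\abs{\nabla F}^2$ directly from the Dirac equation, then uses a dimension-sensitive inequality comparing $\abs{\nabla\abs{F}}^2$ with $\abs{\nabla F}^2$ --- an inequality that encodes the spherical-harmonic structure of monogenic functions and is precisely where the critical exponent emerges --- and finally applies the chain rule $\Delta \abs{F}^q = q\abs{F}^{q-2}\bigl(\abs{F}\Delta\abs{F} + (q-1)\abs{\nabla\abs{F}}^2\bigr)$ to balance the two terms. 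A secondary, but mostly routine, difficulty is the treatment of the range $\frac{n-2}{n-1}<p\leq 1$, where weak-$*$ duality must be replaced by the maximal characterization of $H^p$; this becomes tractable once the subharmonicity is in place.
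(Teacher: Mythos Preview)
The paper does not supply its own proof of this theorem: it is quoted verbatim as \cite[P.120 Theorem 5.4]{Gilbert} and used as a black box in the subsequent development. There is therefore no proof in the paper to compare your proposal against.

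That said, your outline is the correct one and coincides with the argument in Gilbert--Murray's book. The decisive ingredient is indeed the Stein--Weiss subharmonicity of $\abs{F}^q$ for $q \geq \frac{n-2}{n-1}$ when $F$ is annihilated by the Dirac operator on $\mathbb{R}^n_+$, and the rest of the machinery (Poisson majorization, weak-$*$ extraction for $p>1$, the measure-theoretic route for $\frac{n-2}{n-1}<p\leq 1$, and the maximal-function Fatou argument) is exactly as you describe. One minor caution: the phrase ``subharmonic precisely when $q \geq \frac{n-2}{n-1}$'' overstates things slightly, since for the theorem only the sufficiency of this threshold is needed; and your formula $\Delta\abs{F}^q = q\abs{F}^{q-2}\bigl(\abs{F}\Delta\abs{F} + (q-1)\abs{\nabla\abs{F}}^2\bigr)$ should be handled with care at the zero set of $F$, which is the usual technical nuisance in these arguments.
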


As discussed in Subsection \ref{op-boun-Cauchy} for $n=2$, we introduced two operators: the boundary operator and the Cauchy operator. This notation extends to the general case of $n$, albeit with a level of ambiguity.

More precisely, we define two operators:

(1) The boundary function operator:
\begin{equation}
	\label{eq:opB}
	\mathcal{B} : H^p(\mathbb{R}^n_+, \mathfrak{H}) \rightarrow L^p(\mathbb{R}^{n-1}, \mathfrak{H})
\end{equation}
maps $F \in H^p(\mathbb{R}^n_+, \mathfrak{H})$ to its non-tangential limit $f = \mathcal{B}(F) \in L^p(\mathbb{R}^{n-1}, \mathfrak{H})$.

(2) The Cauchy integral operator, pivotal in extending Clifford-analytic functions from their boundary values in higher-dimensional Hardy spaces, is defined for $f \in L^p(\mathbb{R}^{n-1}, \mathfrak{H}), p \geqslant 1,$ by
\begin{equation}
	\label{eq:Cauchy}
	\mathcal{C}f(z) = \frac{1}{\omega_n} \int_{\mathbb{R}^{n-1}} \frac{u - z}{|u - z|^n} \mathbf{e}_0 f(u) \, \mathrm{d}u
\end{equation}
for any $z = (t, \underline{x}) \in \mathbb{R}^n_+$, where $\omega_n$ denotes the surface measure of the unit sphere in $\mathbb{R}^n$.

The Cauchy integral operator has a close relationship with the Riesz transform, the Poisson kernel, and the conjugate Poisson kernel. The $j$th Riesz transform, $ {R}_j$, for $1 \leqslant j \leqslant n-1$, is defined as:
\[
 {R}_j g(x) = \frac{2}{\omega_n} \int_{\mathbb{R}^{n-1}} \frac{x_j - u_j}{|x - u|^n} g(u) \, \mathrm{d}u,
\]
and the Poisson kernel, $P_t(x)$, along with its conjugate $Q_t^{(j)}(x)$ for $j=1, \ldots, n-1$, are given by:
\begin{align*}
	P_t(x) &= \frac{2}{\omega_n} \frac{t}{[t^2 + |x|^2]^{n/2}}, \\
	Q_t^{(j)}(x) &= \frac{2}{\omega_n} \frac{x_j}{[t^2 + |x|^2]^{n/2}}.
\end{align*}

Through convolution, we have:
\begin{align*}
	\mathcal{C}f(z) &= \frac{1}{2} P_t * f(x) + \frac{1}{2} \sum_{j=1}^{n-1} \mathbf{e}_0\mathbf{e}_j Q_t^{(j)} * f(x) \\
	&= \frac{1}{2} P_t * \Big( \big(\mathrm{Id} + \mathbf{e}_0\sum_{j=1}^{n-1} \mathbf{e}_j   R_j \big) f\Big) (x),
\end{align*}

In higher dimensions, the Clifford-Riesz-Hilbert  transform  serves   as a natural generalization of the Hilbert transform, defined as:
\[
\mathcal{H} = \sum_{j=1}^{n-1} \mathbf{e}_j  R_j.
\]
The Cauchy integral operator can be expressed as:
\[
\mathcal{C} = \frac{1}{2} P_t * (\mathrm{Id} + \mathbf{e}_0\mathcal{H}),
\]
highlighting the operator's dependence on the properties of the Poisson kernel and the $L^p$-boundedness of the Riesz transforms, as elucidated in \cite{GTM249,SW71}.

This framework underscores the Cauchy integral operator's crucial role in facilitating the analytic continuation of Clifford-analytic functions from their boundary values within higher-dimensional

This construction  underscores the Cauchy integral operator's essential function in facilitating the analytic continuation of Clifford-analytic functions from their boundary values within higher-dimensional Hardy spaces.

The properties of the Poisson kernel, alongside the $L^p$-bounded nature of the Riesz transforms \cite{GTM249,SW71}, culminate in the following significant theorem:

\begin{theorem}\cite[P.122 Theorem 5.16]{Gilbert}\label{thm:Hp}
	Let $f$ be a function in $L^p(\mathbb{R}^{n-1}, \mathfrak{H})$ for $1 < p < \infty$, or let $f$ and $\mathcal{H}f$ be in $L^1(\mathbb{R}^{n-1}, \mathfrak{H})$ when $p = 1$. The function $\mathcal C f$ then resides in $H^p(\mathbb{R}_{+}^n, \mathfrak{H})$, and for almost every $x \in \mathbb{R}^{n-1}$, it follows that
	\[
\mathcal 	B(\mathcal  C f(z))  = \frac{1}{2}( \mbox{Id} + \mathbf{e}_0\mathcal{H}) f(x).
	\]

	Conversely, if $F$ is a function in $H^p(\mathbb{R}_{+}^n, \mathfrak{H})$ for any $1 \leqslant  p < \infty$, $F$ can be represented as $F =\mathcal C \mathcal B(F)$, where $\mathcal B$ is defined  by \eqref{eq:opB}.
\end{theorem}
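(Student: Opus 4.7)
The plan is to split the theorem into its two directions and handle each separately. For the \emph{direct} direction, I would begin from the convolution identity
\[
\mathcal{C}f(z) = \frac{1}{2}P_t *\bigl((\mathrm{Id}+\mathbf{e}_0\mathcal{H})f\bigr)(x)
\]
already derived in the excerpt. Since $\mathcal{H}=\sum_{j=1}^{n-1}\mathbf{e}_j R_j$ is a finite combination of classical Riesz transforms acting coordinatewise on the finite-dimensional Clifford module $\mathfrak{H}$, the $L^p$-boundedness of each $R_j$ for $1<p<\infty$ (or the $L^1$ hypothesis on $\mathcal{H}f$ when $p=1$) yields $(\mathrm{Id}+\mathbf{e}_0\mathcal{H})f\in L^p(\mathbb{R}^{n-1},\mathfrak{H})$. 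I would then invoke the standard contraction $\|P_t*g\|_p\leqslant \|g\|_p$ for the Poisson kernel on $\mathbb{R}^{n-1}$ to conclude $\sup_{t>0}\|\mathcal{C}f(t,\cdot)\|_p<\infty$. Clifford analyticity of $\mathcal{C}f$ on $\mathbb{R}^n_+$ follows by differentiating under the integral sign, using that the Cauchy kernel is annihilated by the Dirac operator away from its singularity. Hence $\mathcal{C}f\in H^p(\mathbb{R}^n_+,\mathfrak{H})$, and the boundary identity $\mathcal{B}(\mathcal{C}f)=\tfrac{1}{2}(\mathrm{Id}+\mathbf{e}_0\mathcal{H})f$ follows from the classical non-tangential convergence of Poisson integrals of $L^p$ functions combined with the uniqueness of the boundary function provided by Theorem~\ref{thm:bdv}.

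For the \emph{converse}, given $F\in H^p(\mathbb{R}^n_+,\mathfrak{H})$, the plan is to apply the Clifford Cauchy integral formula in a truncated domain. I would fix $z=(t_0,\underline{x}_0)$ with $t_0>0$ and use Cauchy's reproducing formula over $\Omega_{R,\varepsilon}=\{(t,\underline{x}):\varepsilon<t<R,\ |\underline{x}|<R\}$ for $\varepsilon<t_0<R$. The key step is to show that the contributions from the top face $\{t=R\}$ and the vertical walls $\{|\underline{x}|=R\}$ of $\partial\Omega_{R,\varepsilon}$ vanish as $R\to\infty$, so that only the bottom face at height $t=\varepsilon$ survives. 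Combining this with Theorem~\ref{thm:bdv} and a dominated-convergence argument as $\varepsilon\to 0$, the surviving integral converges to $\mathcal{C}(\mathcal{B}F)(z)$, yielding the required identity $F(z)=\mathcal{C}(\mathcal{B}F)(z)$.

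The main obstacle is precisely the vanishing of the Cauchy integral over the unbounded portions of $\partial\Omega_{R,\varepsilon}$: a naive application of H\"older's inequality against the Cauchy kernel produces only $\|F(t,\cdot)\|_p$ times a factor that does not decay in $R$. I would overcome this by first establishing the identity on a dense subclass where the estimates are immediate—for instance $\{\mathcal{C}g:g\in C_c^\infty(\mathbb{R}^{n-1},\mathfrak{H})\}$, whose elements enjoy explicit polynomial decay in all directions of $\mathbb{R}^n_+$—and then extending to all of $H^p$ by continuity, using the $L^p$-boundedness of $\mathcal{B}$ and $\mathcal{C}$ guaranteed by the direct part. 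An alternative route is to translate $F$ by $\delta\mathbf{e}_0$, observe that $F(\cdot+\delta\mathbf{e}_0)=\tfrac{1}{2}P_\delta*((\mathrm{Id}+\mathbf{e}_0\mathcal{H})\mathcal{B}F)$ inherits improved decay along the walls and top face, prove the reproducing formula for each $\delta>0$, and then send $\delta\to 0$ using the $L^p$-continuity of the boundary trace. Either route yields the full statement without appealing to additional structure of $\mathfrak{H}$.
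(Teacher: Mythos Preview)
The paper does not supply its own proof of this theorem: it is quoted verbatim from \cite[p.~122, Theorem~5.16]{Gilbert} and used as a black box. So there is nothing in the paper to compare your proposal against; I can only assess the proposal on its merits.

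Your direct direction is correct and is exactly the standard argument.

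Your converse direction, however, has a genuine gap: both fallback routes you propose are circular. For the density route, extending the identity $F=\mathcal{C}\mathcal{B}F$ from $\{\mathcal{C}g:g\in C_c^\infty\}$ to all of $H^p$ by continuity requires that $\mathcal{C}(C_c^\infty)$ be dense in $H^p(\mathbb{R}^n_+,\mathfrak{H})$. But density of the range of $\mathcal{C}$ in $H^p$ is precisely the surjectivity statement encoded in the converse you are trying to prove; it is not available a priori. For the translation route, the formula you ``observe'',
\[
F(\cdot+\delta\mathbf{e}_0)=\tfrac12\,P_\delta*\bigl((\mathrm{Id}+\mathbf{e}_0\mathcal{H})\,\mathcal{B}F\bigr),
\]
is exactly the identity $F=\mathcal{C}\mathcal{B}F$ evaluated at height $\delta$, so it cannot be taken as input. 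Neither route, as written, supplies the missing decay of the Cauchy integral over the lateral and top faces.

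The ingredient you are missing is the \emph{Poisson representation} coming from harmonic (not Clifford) Hardy-space theory: each real component of $F\in H^p(\mathbb{R}^n_+,\mathfrak{H})$ is harmonic with uniformly bounded $L^p$ slices, so by the classical representation theorem one has $F(t,\underline{x})=P_t*f(\underline{x})$ with $f=\mathcal{B}F$. Clifford analyticity (the Dirac equation) then forces the boundary datum to satisfy $f=\mathbf{e}_0\mathcal{H}f$, whence
\[
\mathcal{C}f=\tfrac12\,P_t*(\mathrm{Id}+\mathbf{e}_0\mathcal{H})f=P_t*f=F.
\]
This is the route taken in \cite{Gilbert}; your Cauchy-formula-on-rectangles approach can also be made to work, but only after you first invoke the Poisson representation to get the needed decay, at which point the contour argument becomes redundant.
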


This theorem allows for the definition of the Cauchy integral operator for $p > 1$,
\[\mathcal C: L^p(\mathbb{R}^{n-1}, \mathfrak{H}) \to H^p(\mathbb{R}_{+}^n, \mathfrak{H}),\]
as specified  in \eqref{eq:Cauchy}.

Analogously to \eqref{eq:RealHp}, we introduce
\[\mathbf{R}H^p(\mathbb{R}_{+}^n, \mathfrak{H}) = \{f \in L^p(\mathbb{R}^{n-1}, \mathfrak{H}) : f = \mathbf{e}_0\mathcal{H}f\},\]
thus establishing an isomorphism
\[H^p(\mathbb{R}_{+}^n, \mathfrak{H}) \cong \mathbf{R}H^p(\mathbb{R}_{+}^n, \mathfrak{H}),\]
indicating the equivalence of Hardy spaces in Clifford analysis with their real $L^p$ analogs under the Clifford module framework.

\subsection{Gilbert's Conjecture}

A pivotal question in the field of Clifford analysis concerns the existence of a Real \(H^p\) theory for Hardy spaces valued in Clifford modules. This inquiry is encapsulated in what is known as Gilbert's conjecture, documented in Gilbert's seminal work on Clifford analysis \cite[P.140 Conjecture 7.23]{Gilbert}.

\begin{conjecture}\cite{Gilbert}\label{conj:GC}
	Given any Clifford module \(\mathfrak{H}\), there exists an element \(\eta \in \mathbb{R}^n\) satisfying \(\eta^2 = -1\), and a subspace \(\mathfrak{H}_0 \subseteq \mathfrak{H}\), such that:
	\begin{enumerate}
		\item The module \(\mathfrak{H}\) decomposes into \(\mathfrak{H} = \mathfrak{H}_0 \oplus \eta\mathfrak{H}_0\),
		\item The Cauchy integral operator \(C\) forms an isomorphism from \(L^p(\mathbb{R}^{n-1}, \mathfrak{H}_0)\) to \(H^p(\mathbb{R}_{+}^n, \mathfrak{H})\) for all \(p > 1\),
		\item The boundary operator  \( \mathcal B\) and followed by an operator \(\mathcal{R}\)  which is two times the orthogonal projection  onto \(\mathfrak{H}_0\), maps functions from \(H^p(\mathbb{R}_{+}^n, \mathfrak{H})\) continuously onto \(L^p(\mathbb{R}^{n-1}, \mathfrak{H}_0)\) for all \(p > 1\).
	\end{enumerate}
\end{conjecture}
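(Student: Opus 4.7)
The plan is to reduce Gilbert's conjecture to a concrete algebraic construction inside the Clifford module $\mathfrak{H}$ and then to realize that construction explicitly via octonionic tools.

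First, I would exploit Theorem \ref{thm:Hp} to collapse the three conditions into a single algebraic/analytic assertion. By that theorem, $\mathcal{C}\mathcal{B} = \mathrm{Id}$ on $H^p(\mathbb{R}^n_+, \mathfrak{H})$, while $\mathcal{B}\mathcal{C} = \tfrac{1}{2}(\mathrm{Id} + \mathbf{e}_0 \mathcal{H})$ on $L^p$. Hence condition (2) is equivalent to the statement that the compression $\tfrac{1}{2}(\mathrm{Id} + \mathbf{e}_0 \mathcal{H})\colon L^p(\mathbb{R}^{n-1}, \mathfrak{H}_0) \to \mathbf{R}H^p(\mathbb{R}^n_+, \mathfrak{H})$ is bijective, and condition (3) asks that composing this with $\mathcal{R} = 2\,\mathrm{Proj}_{\mathfrak{H}_0}$ returns the identity on $L^p(\mathbb{R}^{n-1}, \mathfrak{H}_0)$. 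Passing to Fourier multipliers on the sphere $S^{n-2}$, both requirements reduce to a pointwise statement: for the family $\{M(\omega)\}_{\omega \in S^{n-2}}$ of complex structures on $\mathfrak{H}$ obtained from the symbol of $\mathbf{e}_0 \mathcal{H}$ (each satisfying $M(\omega)^2 = -\mathrm{Id}$), the subspace $\mathfrak{H}_0$ must be transversal to $M(\omega)\mathfrak{H}_0$ simultaneously for every $\omega$, and the projection of $(\mathrm{Id} + M(\omega))\mathfrak{H}_0$ onto $\mathfrak{H}_0$ must be a uniform isomorphism.

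Second, I would take $\eta = \mathbf{e}_0$ (or any unit vector in $\mathbb{R}^n$, since $\eta^2 = -1$ is then automatic in the Clifford algebra) and define $\mathfrak{H}_0$ as the image of a carefully designed idempotent on $\mathfrak{H}$, built from products of the generators $\mathbf{e}_0, \ldots, \mathbf{e}_{n-1}$ so as to be compatible with each $\eta \mathbf{e}_j = \mathbf{e}_0 \mathbf{e}_j$ appearing in $M(\omega)$. With this choice, the direct sum (1) follows from $\eta^2 = -1$ together with the transversality $\mathfrak{H}_0 \cap \eta \mathfrak{H}_0 = \{0\}$, and the continuity of $\mathcal{R}\mathcal{B}$ in (3) follows from the $L^p$-boundedness of the Riesz transforms already used in Theorem \ref{thm:Hp}.

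The main obstacle is condition (2) for an arbitrary irreducible Clifford module $\mathfrak{H}$, because the existence of the required idempotent is sensitive to the representation type of $C\ell_{n-1}$ and to the simultaneous action of the whole sphere of complex structures $M(\omega)$. To handle this, I would realize spinor modules concretely through octonion multiplication: the algebra $\mathbb{O}$ furnishes an $8$-dimensional real module on which $\mathbf{e}_0, \ldots, \mathbf{e}_7$ act by left multiplication, and iterated tensor constructions over $\mathbb{O}$ then yield models for higher-dimensional modules. Within such an octonionic realization, $\mathfrak{H}_0$ can be identified with a specific half cut out by an octonion idempotent, and the bijectivity of $\tfrac{1}{2}(\mathrm{Id} + M(\omega))$ across the unit sphere becomes an explicit computation with octonion products, leveraging the alternative-but-nonassociative structure that the classical real/complex/quaternionic decompositions cannot see. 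This octonionic realization is the technical core of the argument.
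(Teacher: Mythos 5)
Your proposal sets out to \emph{prove} the conjecture for an arbitrary Clifford module, but the paper's central result is that the conjecture is \emph{false} precisely when $n \equiv 6$ or $7 \pmod 8$; the correct outcome is a dichotomy, not an unconditional affirmation. The mechanism for the failure is dimensional: once one has the algebraic reformulation (which roughly matches the first stage of your plan), the conditions $\eta\mathbf{e}_0\mathbf{e}_j\mathfrak{H}_0=\mathfrak{H}_0$ and $\mathbf{e}_0\mathbf{e}_j\eta\mathfrak{H}_0=\mathfrak{H}_0$ force a $C\ell_{n-2}$-module structure on $\mathfrak{H}_0$, so $\dim_{\mathbb R}\mathfrak{H}_0\geqslant\dim_{\mathbb R}\mathcal{R}_{n-2}$, while the splitting $\mathcal{R}_n=\mathfrak{H}_0\oplus\eta\mathfrak{H}_0$ forces $\dim_{\mathbb R}\mathfrak{H}_0=\tfrac12\dim_{\mathbb R}\mathcal{R}_n$. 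Comparing these gives the necessary inequality $\dim_{\mathbb R}\mathcal{R}_n\geqslant 2\dim_{\mathbb R}\mathcal{R}_{n-2}$, which the Bott-periodic pattern of spinor dimensions violates at $n=8k+6$ and $8k+7$ (there $\dim\mathcal{R}_{8k+4}=\dim\mathcal{R}_{8k+5}=\dim\mathcal{R}_{8k+6}=\dim\mathcal{R}_{8k+7^\pm}$). Your plan contains no step that could detect this obstruction: you reduce to a transversality/uniform-isomorphism condition on a sphere of complex structures $M(\omega)$ and then announce you will build $\mathfrak{H}_0$ explicitly, but no octonionic cleverness can produce a subspace whose required dimension exceeds half of $\dim\mathcal{R}_n$.

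There is a second, smaller but real gap: you propose to fix $\eta=\mathbf{e}_0$ ``or any unit vector.'' While $\eta^2=-1$ indeed holds for any unit vector, the choice $\eta=\mathbf{e}_0$ forces $\mathfrak{H}_0$ to be a $C\ell_{n-1}$-module (since then the condition becomes $\mathbf{e}_j\mathfrak{H}_0=\mathfrak{H}_0$ for all $j$), and the corresponding inequality $\dim\mathcal{R}_n\geqslant 2\dim\mathcal{R}_{n-1}$ fails for $n=8k+3$ and $n=8k+5$ even though the conjecture holds there. In those two residue classes one must take $\eta$ in the hyperplane $\{\mathbf{e}_1,\dots,\mathbf{e}_{n-1}\}$, which makes the relevant algebra the indefinite-signature Clifford algebra $C\ell_{1,n-2}$ rather than $C\ell_{n-1}$; the paper chooses $\eta=\mathbf{e}_1$ in those cases for exactly this reason. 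Finally, a small technical mismatch: the paper's octonionic models use the right-multiplication operator $R_p$ assembled into skew matrices $M_p$ acting on $\mathbb{O}^2$, tensored up to $(\mathbb{O}^2)^{\otimes k}$, not left multiplication on $\mathbb{O}$; left multiplication alone cannot realize $C\ell_7$ or $C\ell_8$ because of nonassociativity, so the specific construction you sketch would need reworking.

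In summary, the overall architecture (reduce to an algebraic condition on $\eta,\mathfrak{H}_0$; build spinor modules from division algebras including $\mathbb{O}$) is in the right spirit, but the proposal is missing the decisive dimensional obstruction that makes the conjecture fail for two out of every eight dimensions, and it makes an unwarranted assumption about the choice of $\eta$ that fails for two further residue classes.
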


This conjecture suggests that for each Clifford module-valued Hardy space, there exists a corresponding subspace \(\mathfrak{H}_0\) within \(\mathfrak{H}\), such that the sequence of mappings
\begin{equation}
	L^p(\mathbb{R}^{n-1}, \mathfrak{H}_0 ) \hookrightarrow L^p(\mathbb{R}^{n-1}, \mathfrak{H}) \xrightarrow{\mathcal{C}} H^p(\mathbb{R}^n_+, \mathfrak{H}) \xrightarrow{\mathcal{B}} L^p(\mathbb{R}^{n-1}, \mathfrak{H}) \xrightarrow{\mathcal{R}} L^p(\mathbb{R}^{n-1}, \mathfrak{H}_0)
\end{equation}
results in the identity on \(L^p(\mathbb{R}^{n-1}, \mathfrak{H}_0)\). Similarly, the sequence of operator compositions
\begin{equation}
	H^p(\mathbb{R}_{+}^n, \mathfrak{H}) \xrightarrow{\mathcal{B}} L^p(\mathbb{R}^{n-1}, \mathfrak{H}) \xrightarrow{\mathcal{R}} L^p(\mathbb{R}^{n-1}, \mathfrak{H}_0) \hookrightarrow L^p(\mathbb{R}^{n-1}, \mathfrak{H}) \xrightarrow{\mathcal{C}} H^p(\mathbb{R}_{+}^n, \mathfrak{H})
\end{equation}
acts as the identity on \(H^p(\mathbb{R}_{+}^n, \mathfrak{H})\).

This leads to a necessary condition for the Gilbert conjecture to hold.

\medskip

\noindent{\bf Operator Condition:} If the Gilbert conjecture is true, then
\begin{equation}
	\label{Eq:opid}
	\mathcal{R}\mathcal{B}\mathcal{C} = {\mathop{\mathrm{Id}}}_{L^p(\mathbb{R}^{n-1}, \mathfrak{H}_0 )}, \quad \mathcal{C}\mathcal{R}\mathcal{B} = {\mathop{\mathrm{Id}}}_{H^p(\mathbb{R}_{+}^n, \mathfrak{H})}.
\end{equation}

Existing evidence supports Gilbert's conjecture in specific instances:
\begin{itemize}
	\item Gilbert himself demonstrated the conjecture's validity for \(\mathfrak{H} =  C\ell_n\), setting \(\mathfrak{H}_0 = C\ell_{n-1}\) and \(\eta = \mathbf{e}_0\) \cite{GM88}, \cite[P.125]{Gilbert}.
	\item For \(n = 8\), the conjecture is affirmed through an exact realization of \(C\ell_8\) and its spinor space, leveraging the properties of the octonion algebra. However, the conjecture is invalidated for octonionic Hardy spaces \cite{Li23}.
\end{itemize}

 Importantly, the Clifford algebra $C\ell_{8k+m}$ is identified as semi-simple, being expressible as the sum involving the automorphism of a finite vector space. In cases where the algebra is simple, this vector space corresponds precisely to the spinor space.

The spinor spaces $\mathcal{R}_n$ serve as the irreducible modules of the Clifford algebra $C\ell_n$. It can be demonstrated that Gilbert's conjecture is valid for any Clifford module if and only if it is valid for any irreducible Clifford module $\mathcal{R}_n$.

With the elucidation of the above operator condition, resolving Gilbert's conjecture depends on formulating an equivalent condition:

\medskip

\noindent{\bf Algebraic Condition:} Gilbert's conjecture concerning the spinor space $\mathcal{R}_n$ is fulfilled if and only if one can identify an element $\eta \in \mathbb{R}^n$ with $\eta^2 = -1$, and a subspace $\mathfrak{H}_0 \subset \mathcal{R}_n$, fulfilling the criteria that:
\begin{itemize}
	\item The Spinor space $\mathcal{R}_n$ decomposes into $\mathfrak{H}_0 \oplus \eta\mathfrak{H}_0$,
	\item For each $j = 1, \ldots, n-1$, the relations $\eta\mathbf{e}_0\mathbf{e}_j\mathfrak{H}_0 = \mathbf{e}_0\mathbf{e}_j\eta\mathfrak{H}_0 = \mathfrak{H}_0$ are satisfied.
\end{itemize}
Furthermore, if $\eta$ can be chosen as $\mathbf{e}_0$, the condition simplifies to $\mathbf{e}_j\mathfrak{H}_0 = \mathfrak{H}_0$ for each $j = 1, \ldots, n-1$.

This criterion establishes $\mathfrak{H}_0$ as a module for $C\ell_{n-2}$, leading to a significant revelation:

The existence of such an $\eta$ and subspace $\mathfrak{H}_0$ underpins the dimensional constraint:
\[
\dim_{\mathbb{R}}\mathcal{R}_n \geqslant 2 \dim_{\mathbb{R}}\mathcal{R}_{n-2}.
\]
Specifically, Gilbert's conjecture is inapplicable when $n = 8k+6$ or $8k+7$, where $k \in \mathbb{N}$.

Conversely, for the cases not enumerated above, an explicit construction of $C\ell_n$ and the spinor space $\mathcal{R}_n$ is facilitated using the division algebras $\mathbb{R}$, $\mathbb{C}$, $\mathbb{H}$, and $\mathbb{O}$, thereby substantiating Gilbert's conjecture for $n \neq 8k+6$ and $8k+7$, for any natural number $k$.

The proof is solidly based on the intricate constructions of the Clifford algebra \(C\ell_n\) and its associated spinor spaces \(\mathcal{R}_n\). This involves a meticulous selection of \(\eta\) from \(\mathbb{R}^n\) and a particular subspace \(\mathfrak{H}_0\) within \(\mathcal{R}_n\).

 For the sake of clarity and thoroughness, we have meticulously arranged the descriptions and details into a tabular format; see Table 1. Moving forward, we will proceed under the assumption that
  $k\geqslant 1$.

\begin{table}
 	\begin{center}
		\begin{tabular}[ht]{c c c c c c}
			\hline
			$m$ & $C\ell_{8k+m}$ & $\mathcal{R}_{8k+m}$&$\eta$ &$\dim_{\R}\mathcal{R}_{m}$ &$\dim_{\R}\mathcal{R}_{8k+m}$\\
			\hline
			0 & $End_{\mathbb{R}}\big(\left(\mathbb{O}^2\right)^{\otimes k}\big)$ &  $\left(\mathbb{O}^2\right)^{\otimes k}$&$\bfe_0$ &1 &16k\\
			1 & $End_{\mathbb{C}}\big(\mathbb{C}\otimes\left(\mathbb{O}^2\right)^{\otimes k}\big)$ &  $\mathbb{C}\otimes\left(\mathbb{O}^2\right)^{\otimes k}$&$\bfe_0$& 2&32k\\
			2 & $End_{\mathbb{H}}\big(\mathbb{H}\otimes\left(\mathbb{O}^2\right)^{\otimes k}\big)$ &  $\mathbb{H}\otimes\left(\mathbb{O}^2\right)^{\otimes k}$&$\bfe_0$& 4&64k\\
			3 &  $End_{\mathbb{H}}(\mathbb{H}^+\otimes\left(\mathbb{O}^2\right)^{\otimes k})\oplus End_{\mathbb{H}}(\mathbb{H}^-\otimes\left(\mathbb{O}^2\right)^{\otimes k})$
			&$\mathbb{H}^\pm\otimes\left(\mathbb{O}^2\right)^{\otimes k}$&$\bfe_1$&4&64k\\
			4 & $End_{\mathbb{H}}\left(\mathbb{H}^2\otimes\left(\mathbb{O}^2\right))^{\otimes k}\right)$ & $\mathbb{H}^2\otimes\left(\mathbb{O}^2\right)^{\otimes k}$&$\bfe_0$& 8&128k\\
			5 & $End_{\mathbb{C}}\left(\mathbb{H}^2\otimes\left(\mathbb{O}^2\right))^{\otimes k}\right)$ & $\mathbb{H}^2\otimes\left(\mathbb{O}^2\right)^{\otimes k}$&$\bfe_1$& 8&128k\\
			6 & $End_{\mathbb{R}}\left(\mathbb{H}^2\otimes\left(\mathbb{O}^2\right))^{\otimes k}\right)$ & $\mathbb{H}^2\otimes\left(\mathbb{O}^2\right)^{\otimes k}$&-&8&128k\\
			7 & $End_{\mathbb{R}}(\mathbb{O}^+\otimes\left(\mathbb{O}^2\right)^{\otimes k})\oplus End_{\mathbb{R}}(\mathbb{O}^-\otimes\left(\mathbb{O}^2\right)^{\otimes k})$
			&$\mathbb{O}^\pm\otimes\left(\mathbb{O}^2\right)^{\otimes k}$&-&8&128k \\
			\hline
		\end{tabular}
	\end{center}
\bigskip
\caption{The explicit construction of   $C\ell_n$, $\mathcal R_n$, and the choice of $\eta$ }\label{Tab:1}
\end{table}

 \bigskip

The organization of this paper is as follows. Section 2 revisits foundational concepts in Clifford algebras and octonion algebra. Section 3 presents certain identities related to octonionic right multiplication operators, setting the stage for Section 4. In Section 4, we detail an explicit construction of the Clifford algebra \(C\ell_n\) and the associated spinor space \(\mathcal{R}_n\). Section 5 is dedicated to a re-examination of Gilbert’s Conjecture, laying the groundwork for subsequent analyses. Building on this reformulation, Section 6 demonstrates that Gilbert’s Conjecture is not valid for \(n=8k+6, 8k+7\), whereas in Section 7, we establish its validity for cases when \(n \neq 8k + 6\) or \(8k + 7\). The concluding section offers insights into the selection of \(\eta\) and elaborates on the structure of \(\mathfrak{H}_0\).

\section{Preliminaries}

In this section, we establish the notation and conventions while summarizing the foundational aspects of Clifford algebras and octonions.

\subsection{Universal Clifford Algebra \( C\ell_n\) over \(\mathbb{R}^n\)}

For a thorough understanding of Clifford algebra, we encourage the readers to consult the authoritative texts \cite{Atiyah, Gilbert, F.S, BDS82, Harvey90}.

\begin{definition}
	Let \(\mathbb{A}\) be an associative algebra over \(\mathbb{R}\) endowed with a multiplicative identity, and consider \(v: \mathbb{R}^n \rightarrow \mathbb{A}\) to be a linear mapping from \(\mathbb{R}^n\) into \(\mathbb{A}\). The pair \((\mathbb{A}, v)\) constitutes the universal Clifford algebra \( C\ell_n\) associated with \(\mathbb{R}^n\) if the following criteria are met:
	\begin{enumerate}[label=(\arabic*)]
		\item Algebra \(\mathbb{A}\) is algebraically generated by \(\{v(x) : x \in \mathbb{R}^n\}\) and \(\{\lambda1 : \lambda \in \mathbb{R}\}\),
		\item The square of the image of any vector \(x \in \mathbb{R}^n\) under \(v\), i.e., \((v(x))^2\), equates to the negative of the squared norm of \(x\), symbolically \((v(x))^2 = -\lVert {x} \rVert^2\),
		\item The real dimension of \(\mathbb{A}\), expressed as \(\dim_{\mathbb{R}}\mathbb{A}\), equals \(2^n\).
	\end{enumerate}
\end{definition}

\textbf{Some notations and conventions}
\begin{itemize}
 \item\label{g_i} Define the vectors
 \[\mathbf{e}_{i-1} = (0, \ldots, 0, 1, 0, \ldots, 0)\]
 for \(i = 1, \ldots, n\), where the \(1\) is located in the $i$-th position for each \(i\). This collection, \(\mathbf{e}_0, \ldots, \mathbf{e}_{n-1}\), constitutes the standard orthonormal basis of \(\mathbb{R}^n\).  We make no distinction between \(\mathbf{e}_i\) in \(\mathbb{R}^n\) and its corresponding image \(v(\mathbf{e}_i)\) in \(C\ell_n\), treating them as identical.

  \item We denote
  $$\mathbf{x}=\sum_{i=0}^{n-1}x_i\bfe_i \in \R^n,  \quad  \lVert \mathbf{x} \rVert^2 =\sum_{i=0}^{n-1}x_i^2;$$

  \item Let \(\mathcal{P}(n)\) denote the power set of \(\{0, \ldots, n-1\}\) that encompasses the empty set alongside all non-empty subsets \(\alpha = \{\alpha_1, \ldots, \alpha_k\}\) satisfying \(0 \leqslant \alpha_1 < \cdots < \alpha_k \leqslant n-1\). For a given subset \(\alpha\), we define \(\mathbf{e}_{\alpha} = \mathbf{e}_{\alpha_1} \cdots \mathbf{e}_{\alpha_k}\), and assign \(\mathbf{e}_{\emptyset} = 1\) for the empty set.

  \item  The universal Clifford algebra \(C\ell_n\) is generated as a real linear space by the set \(\{\mathbf{e}_\alpha :  \alpha \in \mathcal{P}(n)\}\).

    \item   The multiplication rule in \(C\ell_n\) is defined by the relation $$\mathbf{e}_i\mathbf{e}_j + \mathbf{e}_j\mathbf{e}_i = -2\delta_{ij}$$  for all \(i, j = 0, \ldots, n-1\), where \(\delta_{ij}\) is the Kronecker delta.

    \item Clifford conjugation is a linear operation within the Clifford algebra, defined on the basis elements by the formula
    \[
    \bar{\mathbf{e}}_\alpha = (-1)^{\frac{|\alpha|(|\alpha|+1)}{2}} \mathbf{e}_\alpha,
    \]
    where the magnitude \(|\alpha|\) for a subset \(\alpha = \{\alpha_1, \ldots, \alpha_k\}\) is determined by \(|\alpha| = \alpha_1 + \cdots+ \alpha_k\).
       \end{itemize}
The Clifford algebra \(C\ell_n\) is renowned for its semi-simple nature, as detailed in the following theorem.

\begin{theorem}\label{thm:semi-simple}
	For different values of \(n\), the algebra \(C\ell_n\) is isomorphic to the endomorphism algebra of certain vector spaces, as described below:
	\begin{enumerate}
		\item For \(n = 8k\) or \(n = 8k+6\), there exists a real vector space \(\mathcal{R}_{n}\) such that $$ C\ell_{n} \cong \text{End}_{\mathbb{R}}(\mathcal{R}_{n}).$$
		\item For \(n = 8k+1\) or \(n = 8k+5\), there exists a complex vector space \(\mathcal{R}_{n}\) such that $$ C\ell_{n} \cong \text{End}_{\mathbb{C}}(\mathcal{R}_{n}).$$
		\item For \(n = 8k+2\) or \(n = 8k+4\), there exists a left quaternionic vector space \(\mathcal{R}_{n}\) such that $$ C\ell_{n} \cong \text{End}_{\mathbb{H}}(\mathcal{R}_{n}).$$
		\item For \(n = 8k+3\) or \(n = 8k+7\), it is established that $$ C\ell_{8k+3} \cong \text{End}_{\mathbb{H}}(\mathcal{R}^+_{8k+3}) \oplus \text{End}_{\mathbb{H}}(\mathcal{R}^-_{8k+3})$$  and $$C\ell_{8k+7} \cong \text{End}_{\mathbb{R}}(\mathcal{R}^+_{8k+7}) \oplus \text{End}_{\mathbb{R}}(\mathcal{R}^-_{8k+7}).$$
	\end{enumerate}
\end{theorem}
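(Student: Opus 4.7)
The plan is to combine direct verification in the low-dimensional cases $0 \leq n \leq 7$ with the eight-fold periodicity isomorphism
\[
C\ell_{n+8} \;\cong\; C\ell_n \otimes_{\mathbb{R}} \text{End}_{\mathbb{R}}(\mathbb{O}^2).
\]
Because $\text{End}_{\mathbb{F}}(V) \otimes_{\mathbb{R}} \text{End}_{\mathbb{R}}(W) \cong \text{End}_{\mathbb{F}}(V \otimes_{\mathbb{R}} W)$ for each $\mathbb{F} \in \{\mathbb{R}, \mathbb{C}, \mathbb{H}\}$, once the periodicity and the eight base cases are in place, iteration immediately delivers the algebra structure of $C\ell_{8k+m}$ together with the identification $\mathcal{R}_{8k+m} = \mathcal{R}_m \otimes (\mathbb{O}^2)^{\otimes k}$ recorded in Table~\ref{Tab:1}, including the two-summand form precisely when $m \in \{3, 7\}$.

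For the eight base cases I would proceed case-by-case. The algebras $C\ell_0 = \mathbb{R}$, $C\ell_1 \cong \mathbb{C}$, and $C\ell_2 \cong \mathbb{H}$ follow directly from the defining relations and the required real dimension $2^n$. For $n = 4, 5, 6$, I would act on $\mathcal{R}_n = \mathbb{H}^2$ using right multiplications by $\mathbf{i}, \mathbf{j}, \mathbf{k}$ with sign flips on the second summand together with a swap operator, and verify both the Clifford relations and the dimension count to identify $C\ell_n$ as the claimed matrix algebra over $\mathbb{H}$, $\mathbb{C}$, or $\mathbb{R}$ respectively. For the split cases $n = 3, 7$, the volume form $\omega = \mathbf{e}_0 \cdots \mathbf{e}_{n-1}$ is central with $\omega^2 = +1$, so the idempotents $\frac{1}{2}(1 \pm \omega)$ decompose $C\ell_n$ onto the two claimed summands acting on $\mathcal{R}_n^{\pm}$. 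The case $n = 8$ is the linchpin: I would realize $\mathbf{e}_1, \ldots, \mathbf{e}_7$ on $\mathcal{R}_8 = \mathbb{O}^2$ via $(x,y) \mapsto (L_{\mathbf{e}_i} x, -L_{\mathbf{e}_i} y)$ and $\mathbf{e}_0$ via the swap $(x,y) \mapsto (-y, x)$, then compare $256 = 2^8 = \dim_{\mathbb{R}} \text{End}_{\mathbb{R}}(\mathbb{O}^2)$ to conclude the isomorphism. For the inductive step, I would tensor the generators of $C\ell_n$ with a $\mathbb{Z}_2$-grading operator on $\mathbb{O}^2$ to enforce anticommutation between the two factors, while tensoring the identity on $\mathcal{R}_n$ with the eight new $C\ell_8$-generators.

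The main obstacle is the nonassociativity of the octonions: left multiplications on $\mathbb{O}$ do not satisfy $L_a L_b = L_{ab}$, so the required relation
\[
L_{\mathbf{e}_i} L_{\mathbf{e}_j} + L_{\mathbf{e}_j} L_{\mathbf{e}_i} \;=\; -2\delta_{ij}\,\text{Id}
\]
must be established by polarizing the alternativity law $a(a x) = a^2 x$ and invoking Moufang-type identities, rather than by a formal manipulation within an associative algebra. This is precisely what Section~3 of the paper is designed to develop, and I would invoke those octonionic multiplication identities to verify the Clifford relations both on $\mathbb{O}^2$ for the $n=8$ base case and on the inductive tensor products $\mathcal{R}_n \otimes \mathbb{O}^2$ used in the periodicity step.
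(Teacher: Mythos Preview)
The paper does not actually supply a proof of Theorem~\ref{thm:semi-simple}: it is quoted in the Preliminaries as a classical fact, with attributions to \cite{Atiyah,Gilbert,F.S,Harvey90}. What the paper \emph{does} prove is the closely related Theorem~\ref{THm:main-988}, which furnishes explicit matrix realizations of each $C\ell_{8k+m}$ on the spaces listed in Table~\ref{Tab:1}; the isomorphisms of Theorem~\ref{thm:semi-simple} then follow by invoking the universal property (Theorem~\ref{thm:construction}) together with a dimension count. So the comparison you want is really between your sketch and the argument in Section~4.

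Your route is the classical inductive one: verify $n\le 8$ by hand and then propagate by the eight-fold periodicity $C\ell_{n+8}\cong C\ell_n\otimes_{\mathbb R}\mathrm{End}_{\mathbb R}(\mathbb O^2)$, using a grading operator on $\mathbb O^2$ to make the two sets of generators anticommute. The paper instead writes down, for each residue class $m$ mod $8$, a single closed-form Clifford map $v_{8k+m}$ on the already-assembled space $\mathcal R_m\otimes(\mathbb O^2)^{\otimes k}$ (Remark~\ref{thm:CLs}) and checks $v(x)^2=-\|x\|^2$ plus the dimension directly via Lemma~\ref{lemma:ABst}. In effect the paper unrolls your induction: its operators $A_p^s$ and $E_k$ are exactly the ``tensor with a grading operator'' mechanism you describe, and its use of \emph{right} multiplications $R_p$ in place of your left multiplications $L_{\mathbf e_i}$ is immaterial (alternativity gives the same anticommutation identity either way). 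Your approach is conceptually cleaner and makes the role of periodicity transparent; the paper's is more explicit and yields the concrete spinor models needed later for choosing $\eta$ and $\mathfrak H_0$, which is why that presentation was preferred there. There is no genuine gap in your plan.
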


\begin{definition}\label{def:spinor}
  The vector space $\mathcal{R}_{n}$ in the theorem above is called the spinor spaces.
\end{definition}

\begin{remark}
	The designation ``spinor space" varies across literature. Harvey refers to the vector spaces \(\mathcal{R}_{n}\) as ``pinor spaces" \cite[P.210]{Harvey90}, whereas Gilbert labels them as "real spinor spaces" \cite[P.59 (7.42)]{Gilbert}. Regardless of the nomenclature, these spaces represent the class of irreducible \(C\ell_n\)-modules.
\end{remark}

\begin{definition}\label{def:CM}
	A finite-dimensional real Hilbert space \(\mathfrak{H}\) is termed a \(C\ell_n\) module if there exist skew-adjoint real linear operators \(T_1, \ldots, T_n\) satisfying the relations
	\[T_j T_k + T_k T_j = -2\delta_{jk} \text{Id}\] for any $1 \leqslant j, k \leqslant  n.$
\end{definition}

  \subsection{Octonion algebra}

The octonions \(\mathbb{O}\) form a non-commutative, non-associative normed division algebra of eight dimensions over \(\mathbb{R}\). For a comprehensive discussion on octonion algebra, refer to \cite{Baez,CS03}.

\textbf{Some notations and conventions}
\begin{itemize}
\item Let \(\mathbf{e_0} = 1, \mathbf{e_1}, \ldots, \mathbf{e_7}\) denote the canonical basis of the octonions. The multiplication of basis elements adheres to the rule
\[
\mathbf{e_i}\mathbf{e_j} + \mathbf{e_j}\mathbf{e_i} = -2\delta_{ij}, \quad \text{for } i, j = 1, \ldots, 7.
\]
Note that in \(\mathbb{R}^8\), the notation \(\mathbf{e_i}\) might represent a vector in \(\mathbb{R}^8\), a Clifford algebra element in \(Cl_8\), or an octonion in \(\mathbb{O}\). These representations are not differentiated unless clarification is necessary to prevent confusion.

\item For any octonion \(x = x_0 + \sum_{i=1}^7 x_i \mathbf{e_i} \in \mathbb{O}\), with \(x_i \in \mathbb{R}\), the octonion conjugation is defined as \(\overline{x} = x_0 - \sum_{i=1}^7 x_i \mathbf{e_i}\), and the real part operator is given by \(Re(x) = x_0\). The set \(\mathbb{O}\) constitutes an 8-dimensional Euclidean space equipped with the inner product \((p, q) = Re(p\overline{q})\) for any \(p, q \in \mathbb{O}\).

\item The associator is  defined for any elements \(a, b, c\in\O \) by
\[
[a, b, c] = (ab)c - a(bc).
\]
In the context of octonions, the associator exhibits alternativity, satisfying the identities
\[
[a, b, c] = -[a, c, b] = -[b, a, c] = -[\overline{a}, b, c].
\]

\item The intricacies of the octonions' multiplication table are neatly captured by the Fano plane, as depicted in Figure 1 and elaborated upon in \cite{Baez}. Within this schema, the Fano plane's vertices are marked by the imaginary units \(\mathbf{e_1}, \ldots, \mathbf{e_7}\). Each of the plane's  directed lines or the circle delineates a quaternionic triple. The multiplication of any two distinct imaginary units is defined by the third unit situated on the  line or the circle linking them, with the multiplication's sign being influenced by the line's directional orientation.

\begin{figure}[ht]
\centering
  \includegraphics[width=6cm]{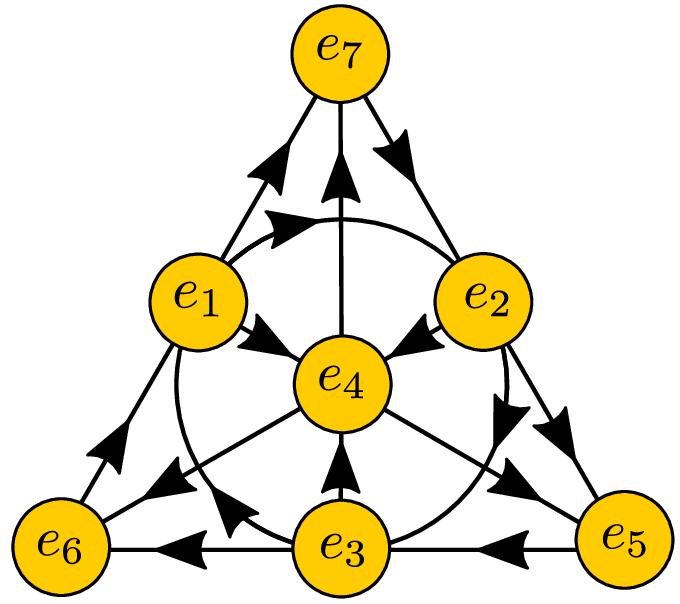}
  \caption{Fano plane}
  \label{fig:1}
\end{figure}

\end{itemize}

\section{Octonionic Techniques}

 Our investigation into Clifford algebras and spinor spaces prominently features the octonions as a fundamental instrument. At the heart of our approach is the formulation of matrices. The elements of these matrices are operators, crafted from the act of right multiplication by octonions.

For any octonion number \(p \in \mathbb{O}\), we introduce an operator \(M_p\) that is classified within the set of endomorphisms of \(\mathbb{O}^2\) over the real numbers, explicitly denoted by \(M_p \in \text{End}_{\mathbb{R}}(\mathbb{O}^2)\). The operator is defined through the matrix representation
\[
M_p = \begin{pmatrix}
	0 & R_p \\
	-R_{\bar{p}} & 0
\end{pmatrix},
\]
where each matrix component is a mapping applicable to column vectors from \(\mathbb{O}^2\). In this context, \(R_p \in \text{End}_{\mathbb{R}}(\mathbb{O})\) signifies the right multiplication operator, described by the mapping
\[
R_p: \mathbb{O} \to \mathbb{O}, \quad R_p(q) = qp,
\]
for any element \(q \in \mathbb{O}\). This setup provides a structured method for engaging with elements of \(\mathbb{O}^2\) via the octonion algebra, specifically through the utilization of \(M_p\).

Additionally, we define the matrix
\[
E = \begin{pmatrix}
	Id_{\mathbb{O}} & 0 \\
	0 & -Id_{\mathbb{O}}
\end{pmatrix}.
\]

The following lemma plays a crucial role in the construction of \(C\ell_n\) and \(\mathcal{R}_n\):

\begin{lemma}
	For any \(p, q \in \mathbb{O}\), the subsequent properties hold:
	\begin{enumerate}
		\item \(R_pR_{\bar{q}} + R_qR_{\bar{p}} = 2(p,q)Id_{\mathbb{O}}.\)
		\item \(M_pM_{q} + M_qM_{p} = -2(p,q)Id_{\mathbb{O}^2}.\)
		\item \(M_pE + EM_p = 0.\)
	\end{enumerate}
\end{lemma}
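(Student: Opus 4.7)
The plan is to prove the three identities in order, with (1) doing the real work and (2), (3) following by direct block-matrix computation.

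For (1), my strategy is to first reduce to the case $p=q$ and then polarize. When $p=q$, the claim becomes $2R_p R_{\bar p} = 2(p,p)\,Id_{\mathbb O}$, i.e. $(x\bar p)p = \lVert p\rVert^2 x$ for every $x\in\mathbb O$. This is exactly the alternative law applied to $x,\bar p,p$: the subalgebra generated by $\bar p$ and $p$ is associative (since $\bar p = 2\mathrm{Re}(p)-p$ lies in $\mathbb R[p]$), so $(x\bar p)p = x(\bar p p) = x\lVert p\rVert^2$. Once the diagonal case holds, I replace $p$ by $p+q$ in the identity $R_p R_{\bar p}=(p,p)\,Id$, expand using bilinearity of $R$ in its subscript, and subtract the two diagonal identities; the cross terms give precisely $R_p R_{\bar q}+R_q R_{\bar p}=2(p,q)\,Id_{\mathbb O}$, because $(p+q,p+q)-(p,p)-(q,q)=2(p,q)$.

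For (2), I will carry out the block multiplication
\[
M_p M_q \;=\; \begin{pmatrix} 0 & R_p \\ -R_{\bar p} & 0 \end{pmatrix}\begin{pmatrix} 0 & R_q \\ -R_{\bar q} & 0 \end{pmatrix} \;=\; \begin{pmatrix} -R_p R_{\bar q} & 0 \\ 0 & -R_{\bar p} R_q \end{pmatrix},
\]
and symmetrize in $p,q$ to obtain a block-diagonal matrix whose diagonal entries are $-(R_p R_{\bar q}+R_q R_{\bar p})$ and $-(R_{\bar p} R_q+R_{\bar q} R_p)$. The first equals $-2(p,q)\,Id_{\mathbb O}$ by part (1); for the second I apply part (1) with $p,q$ replaced by $\bar p,\bar q$, using that octonion conjugation is an isometry so $(\bar p,\bar q)=(p,q)$, giving the same scalar. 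The two diagonal blocks then combine into $-2(p,q)\,Id_{\mathbb O^2}$.

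For (3), the identity is purely formal: $E$ is a diagonal sign matrix and $M_p$ is off-diagonal, so $M_p E$ and $E M_p$ are each off-diagonal with opposite signs in their two entries, hence they cancel. I will simply exhibit the two products
\[
M_p E \;=\; \begin{pmatrix} 0 & -R_p \\ -R_{\bar p} & 0 \end{pmatrix}, \qquad E M_p \;=\; \begin{pmatrix} 0 & R_p \\ R_{\bar p} & 0 \end{pmatrix},
\]
and add. The only genuinely nontrivial step in the whole lemma is the use of alternativity in (1); everything else is bookkeeping. I expect no further obstacle, and the lemma will be available for the subsequent construction of $C\ell_n$ and $\mathcal R_n$.
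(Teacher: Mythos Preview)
Your proof is correct. The only place where your route differs from the paper's is in part~(1): the paper computes $(z\bar q)p+(z\bar p)q$ directly for general $p,q$ and uses the alternating property of the associator (together with $[\bar a,b,c]=-[a,b,c]$) to see that the two associator terms cancel, leaving $z(\bar q p+\bar p q)=2(p,q)z$. You instead establish the diagonal case $R_pR_{\bar p}=\lVert p\rVert^2\,Id$ via $[x,\bar p,p]=0$ and then polarize. Both arguments are equally elementary and rest on exactly the same input, namely alternativity; yours avoids tracking two associator terms at the cost of an extra expansion, while the paper's is a one-line direct computation. For (2) and (3) the paper gives no argument at all, and your block-matrix computations (including the observation that $(\bar p,\bar q)=(p,q)$ for the lower-right block in~(2)) are precisely what is needed.
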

\begin{proof}
We just give a proof of (1),  and the simple fact just follows that for any octonion $z$, we have
	\begin{align*}
		&\left(R_pR_{\bar{q}}+R_qR_{\bar{p}}\right)(z)=(z\bar{q})p+(z\bar{p})q\\
		=&[z, \bar{q}, p]+z(\bar{q}p)-[z, \bar{p}, q]+z(\bar{p}q)=z(\bar{q}p+\bar{p}q)\\
=&2zRe(\overline{q}p)=2(p,q)z.
	\end{align*}
\end{proof}

For every octonion $p \in \mathbb{O}$, we associate maps $A_p^s, B_p^t \in \mathrm{End}_{\mathbb{R}}((\mathbb{O}^2)^{\otimes k})$ for $s, t = 1, \ldots, k$, defined as
\[
A_p^s := \underbrace{\mathrm{Id}_{\mathbb{O}^2} \otimes \cdots \otimes \mathrm{Id}_{\mathbb{O}^2} \otimes M_{p}}_{\text{$s$-th position}} \otimes E \otimes \cdots \otimes E.
\]
Furthermore, we define
\[
E_k = \underbrace{E \otimes \cdots \otimes E}_{k \text{ times}} \in \mathrm{End}_{\mathbb{R}}((\mathbb{O}^2)^{\otimes k}).
\]

\begin{lemma}\label{lemma:ABst}
	For all $p, q \in \mathbb{O}$, the following relations hold:
	\begin{enumerate}
		\item For $s \neq t, s, t = 1, \ldots, k$, we have:
		\[
		A_p^s A_q^t + A_q^t A_p^s = 0.
		\]
		\item For $s = 1, \ldots, k$, it follows that:
		\[
		A_p^s A_q^s + A_q^s A_p^s = -2\left( p, q \right) \mathrm{Id}_{(\mathbb{O}^2)^{\otimes k}}.
		\]
		\item For $s = 1, \ldots, k$, we also find:
		\[
		A_p^s E_k + E_k A_p^s = 0.
		\]
	\end{enumerate}
\end{lemma}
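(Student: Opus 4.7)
The plan is to deduce all three identities by reducing the tensor-product compositions of $A_p^s$ (and $E_k$) to factorwise products and then invoking the three algebraic identities supplied by the previous lemma, namely
$R_pR_{\bar q}+R_qR_{\bar p}=2(p,q)\mathrm{Id}_{\mathbb O}$,
$M_pM_q+M_qM_p=-2(p,q)\mathrm{Id}_{\mathbb O^2}$, and $M_pE+EM_p=0$.
The only preliminary observation needed is that $E^2=\mathrm{Id}_{\mathbb O^2}$, which is immediate from the definition of $E$.

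For part (2), both $A_p^s$ and $A_q^s$ have the nontrivial factor at the same tensor slot. Using $(X_1\otimes\cdots\otimes X_k)(Y_1\otimes\cdots\otimes Y_k)=X_1Y_1\otimes\cdots\otimes X_kY_k$, the slots before position $s$ give $\mathrm{Id}\cdot\mathrm{Id}=\mathrm{Id}$ and the slots after position $s$ give $E\cdot E=\mathrm{Id}$, so
\[
A_p^s A_q^s + A_q^s A_p^s \;=\; \mathrm{Id}\otimes\cdots\otimes (M_pM_q+M_qM_p)\otimes\mathrm{Id}\otimes\cdots\otimes \mathrm{Id}.
\]
Applying identity (2) of the preceding lemma at slot $s$ yields the desired $-2(p,q)\mathrm{Id}_{(\mathbb O^2)^{\otimes k}}$.

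For part (1), by relabelling we may assume $s<t$. At slot $s$, $A_p^s$ has $M_p$ while $A_q^t$ has $\mathrm{Id}$, so the product contributes $M_p$ in both orderings. At slot $t$, $A_p^s$ has $E$ and $A_q^t$ has $M_q$, so $A_p^sA_q^t$ contributes $EM_q$ while $A_q^tA_p^s$ contributes $M_qE$. All other slots give $\mathrm{Id}$ in both orderings (either $\mathrm{Id}\cdot\mathrm{Id}$, $E\cdot\mathrm{Id}$, $\mathrm{Id}\cdot E$, or $E\cdot E=\mathrm{Id}$). Hence the sum is $\mathrm{Id}\otimes\cdots\otimes M_p\otimes E\otimes\cdots\otimes E\otimes(EM_q+M_qE)\otimes\mathrm{Id}\otimes\cdots\otimes\mathrm{Id}$, which vanishes by identity (3) of the preceding lemma. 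For part (3), the same factorwise computation against $E_k=E\otimes\cdots\otimes E$ gives $E$ at every slot except slot $s$, where we obtain $M_pE+EM_p=0$, so $A_p^sE_k+E_kA_p^s=0$.

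No step presents a genuine obstacle: the entire argument is a bookkeeping of tensor slots, and the only place where a nontrivial fact is invoked is the anticommutation identities from the previous lemma. The mildest care is needed in part (1), where one must track exactly which slots receive $E$, which receive $\mathrm{Id}$, and which receive the mixed factor $M_qE$ vs $EM_q$; this is why the case split $s<t$ (or equivalently $s>t$) must be made explicit.
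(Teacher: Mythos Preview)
Your argument is correct and matches the paper's approach exactly: the paper likewise reduces to slotwise products and, for part (1) with $s<t$, isolates the single slot $t$ where $EM_q$ and $M_qE$ differ, invoking $M_qE+EM_q=0$. One small wording slip: you say ``all other slots give $\mathrm{Id}$ in both orderings,'' but slots $s+1,\ldots,t-1$ actually give $E$ (as your own displayed tensor expression correctly shows); the point is that those slots \emph{agree} in both orderings, not that they equal $\mathrm{Id}$.
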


\begin{proof}
	For item (1), let's consider the case when \(1 \leqslant s < t \leqslant k\). The composition \(A_p^sA_q^t\) can be detailed as follows:
	\begin{align*}
		A_p^sA_q^t &= \underbrace{\mathrm{Id}_{\mathbb{O}^2} \otimes \cdots \otimes \mathrm{Id}_{\mathbb{O}^2}}_{\text{$s-1$ times}} \otimes M_{p} \otimes \underbrace{E \otimes \cdots \otimes E}_{\text{$t-s-1$ times}} \otimes EM_{q} \otimes \underbrace{E^2 \otimes \cdots \otimes E^2}_{\text{$k-t$ times}} \\
		&= \underbrace{\mathrm{Id}_{\mathbb{O}^2} \otimes \cdots \otimes \mathrm{Id}_{\mathbb{O}^2}}_{\text{$s-1$ times}} \otimes M_{p} \otimes \underbrace{E \otimes \cdots \otimes E}_{\text{$t-s-1$ times}} \otimes EM_{q} \otimes \underbrace{\mathrm{Id}_{\mathbb{O}^2} \otimes \cdots \otimes \mathrm{Id}_{\mathbb{O}^2}}_{\text{$k-t$ times}}.
	\end{align*}
	
	Conversely, calculating \(A_q^tA_p^s\) yields:
	\begin{align*}
		A_q^tA_p^s &= \underbrace{\mathrm{Id}_{\mathbb{O}^2} \otimes \cdots \otimes \mathrm{Id}_{\mathbb{O}^2}}_{\text{$s-1$ times}} \otimes M_{p} \otimes \underbrace{E \otimes \cdots \otimes E}_{\text{$t-s-1$ times}} \otimes M_{q}E \otimes \underbrace{E^2 \otimes \cdots \otimes E^2}_{\text{$k-t$ times}} \\
		&= \underbrace{\mathrm{Id}_{\mathbb{O}^2} \otimes \cdots \otimes \mathrm{Id}_{\mathbb{O}^2}}_{\text{$s-1$ times}} \otimes M_{p} \otimes \underbrace{E \otimes \cdots \otimes E}_{\text{$t-s-1$ times}} \otimes (-EM_{q}) \otimes \underbrace{\mathrm{Id}_{\mathbb{O}^2} \otimes \cdots \otimes \mathrm{Id}_{\mathbb{O}^2}}_{\text{$k-t$ times}} \\
		&= -A_p^sA_q^t.
	\end{align*}
	
	This demonstrates that \(A_p^sA_q^t + A_q^tA_p^s = 0\), confirming the anticommutativity for distinct indices \(s\) and \(t\) as claimed.
\end{proof}

\section{ Realizing $C\ell_n$ and $\mathcal{R}_n$ Through Real Division Algebras }

In this section, we aim to furnish Clifford algebras with explicit matrix realizations. This approach will enable us to derive concrete expressions for the spinor spaces associated with these algebras.

The construction of explicit matrix realizations for Clifford algebras is based on the universal property of Clifford algebras, which we will detail below.

\begin{theorem}[{\cite[P.31, Theorem 3.17]{Gilbert}}]\label{thm:construction}
	Let $\mathbb{A}$ be an associative algebra over $\mathbb{R}$ with identity element $1$. Consider a map $v: \mathbb{R}^n \rightarrow \mathbb{A}$ that is an $\mathbb{R}$-linear embedding. Assume the following conditions are met:
	\begin{enumerate}
		\item The algebra $\mathbb{A}$ is generated by the set $\{v(x) : x \in \mathbb{R}^n\}$ along with $\{\lambda1 : \lambda \in \mathbb{R}\}$.
		\item For every $x \in \mathbb{R}^n$, it holds that $(v(x))^2 = -\lVert x \rVert^2$.
	\end{enumerate}
	Then, exactly one of the following statements is true:
	\begin{enumerate}
		\item The dimension of $\mathbb{A}$ over $\mathbb{R}$, denoted $\dim_{\mathbb{R}}\mathbb{A}$, is $2^n$.
		\item If $n \equiv 3 \ (\text{mod} \ 4)$, then $\dim_{\mathbb{R}}\mathbb{A} = 2^{n-1}$, and the product $\mathbf{e}_0\mathbf{e}_1\cdots\mathbf{e}_{n-1}$ belongs to $\mathbb{R}$.
	\end{enumerate}
\end{theorem}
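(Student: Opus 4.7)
The plan is to prove the dichotomy by constraining the volume element $\omega = \mathbf{e}_0\mathbf{e}_1\cdots\mathbf{e}_{n-1}$ and analysing when the spanning relations must collapse.

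First I would polarize condition (2): applying $(v(x+y))^2 = -\lVert x+y\rVert^2$ and subtracting $(v(x))^2$ and $(v(y))^2$ yields $v(x)v(y)+v(y)v(x)=-2\langle x,y\rangle$, which on the orthonormal basis reads $\mathbf{e}_i\mathbf{e}_j+\mathbf{e}_j\mathbf{e}_i=-2\delta_{ij}$. Combined with condition (1), these anticommutation relations let one reorder any monomial in the $\mathbf{e}_i$'s into the canonical form $\mathbf{e}_\alpha$ with $\alpha \in \mathcal{P}(n)$, giving the spanning set $\{\mathbf{e}_\alpha\}$ of cardinality $2^n$ and hence the upper bound $\dim_{\mathbb{R}}\mathbb{A}\leqslant 2^n$. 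A direct sign-count then yields the key identities
$$
\omega\mathbf{e}_i = (-1)^{n-1}\mathbf{e}_i\omega, \qquad \omega^2 = (-1)^{n(n+1)/2},
$$
so $\omega$ is central precisely when $n$ is odd, and $\omega^2=+1$ precisely when $n\equiv 0$ or $3\pmod 4$.

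Next, suppose that $\dim_{\mathbb{R}}\mathbb{A}<2^n$, so there is a nontrivial relation $\sum_\alpha c_\alpha\mathbf{e}_\alpha = 0$. After multiplying by $\mathbf{e}_\beta^{-1}=\pm\mathbf{e}_\beta$ for some $\beta$ with $c_\beta\neq 0$, we may rewrite this as $1=\sum_{\alpha\neq\emptyset}c'_\alpha\mathbf{e}_\alpha$. Conjugating both sides by $\mathbf{e}_i$ fixes $1$ on the left and multiplies each $\mathbf{e}_\alpha$ on the right by $\sigma_i(\alpha)=\pm 1$, where a short combinatorial calculation gives $\sigma_i(\alpha)=(-1)^{|\alpha|}$ if $i\notin\alpha$ and $(-1)^{|\alpha|-1}$ otherwise. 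Iteratively averaging with the operators $\tfrac12(1+\mathrm{Ad}_{\mathbf{e}_i})$ kills every $\mathbf{e}_\alpha$ that fails to commute with some $\mathbf{e}_i$, reducing the relation to one involving only central basis elements; a parity check on $\sigma_i(\alpha)$ identifies these as $\{1,\omega\}$ when $n$ is odd and $\{1\}$ when $n$ is even. Consequently the relation must take the shape $1 = c\cdot 1 + d\omega$ with $d\neq 0$ and $n$ odd, forcing $\omega\in\mathbb{R}$. Squaring forces $\omega^2 \geqslant 0$, hence $(-1)^{n(n+1)/2}=1$, which among odd $n$ singles out $n\equiv 3\pmod 4$, establishing part of the conjunction in case (2).

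Finally, in the residual case $n\equiv 3\pmod 4$ with $\omega=\varepsilon\in\{\pm 1\}$, the identity $\mathbf{e}_\alpha\omega=\pm\mathbf{e}_{\alpha^c}$ expresses every $\mathbf{e}_\alpha$ with $|\alpha|>(n-1)/2$ as a scalar multiple of $\mathbf{e}_{\alpha^c}$ with $|\alpha^c|\leqslant(n-1)/2$, so $\mathbb{A}$ is spanned by $\{\mathbf{e}_\alpha:|\alpha|\leqslant(n-1)/2\}$ and $\dim_{\mathbb{R}}\mathbb{A}\leqslant 2^{n-1}$. The matching lower bound $\dim_{\mathbb{R}}\mathbb{A}=2^{n-1}$ is obtained by exhibiting an explicit realisation satisfying the hypotheses with exactly this dimension (for instance $\mathbb{H}$ for $n=3$, extended by the tensor-product constructions of the later sections). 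The main technical obstacle is the averaging reduction: since linear independence of the $\mathbf{e}_\alpha$ is exactly what is in question, the projection onto the centralizer must be performed inside the a priori degenerate algebra $\mathbb{A}$, and care is needed so that the argument remains valid before we have identified the kernel of the evaluation map. Once the central-element reduction is secured, the parity analysis and dimension count are routine.
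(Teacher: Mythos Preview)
The paper does not prove this theorem; it is quoted verbatim from Gilbert--Murray (Theorem~3.17 there) and used as a black box to certify the explicit matrix realisations in Section~4. So there is no in-paper argument to compare against, and your proposal is an independent proof of a cited result.

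Your outline is the standard one and is largely sound: the polarisation, the spanning by $\{\mathbf{e}_\alpha\}$, the computation of $\omega^2$ and its centrality, and the averaging reduction forcing any nontrivial relation down to central basis monomials are all correct. The worry you flag about carrying out the averaging ``inside the a priori degenerate algebra'' is not a real obstacle: conjugation by the invertible elements $\mathbf{e}_i$ and forming $\tfrac12(1+\mathrm{Ad}_{\mathbf{e}_i})$ are well-defined linear operations in any associative algebra, independent of whether the $\mathbf{e}_\alpha$ happen to be linearly independent.

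The genuine gap is your lower bound $\dim_{\mathbb{R}}\mathbb{A}\geqslant 2^{n-1}$ in case~(2). Exhibiting \emph{some other} algebra (e.g.\ $\mathbb{H}$ for $n=3$) of dimension $2^{n-1}$ satisfying the hypotheses says nothing about the dimension of the \emph{given} $\mathbb{A}$; it only shows $2^{n-1}$ is attainable, not that $\mathbb{A}$ cannot collapse further. What is actually needed is that, once $\omega=\varepsilon\in\{\pm1\}$, the $2^{n-1}$ monomials $\{\mathbf{e}_\alpha:\alpha\subseteq\{1,\dots,n-1\}\}$ remain linearly independent in $\mathbb{A}$. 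This follows by rerunning your own averaging argument using only conjugation by $\mathbf{e}_1,\dots,\mathbf{e}_{n-1}$: since $n-1$ is even, the unique monomial among these that commutes with every $\mathbf{e}_i$ ($1\leqslant i\leqslant n-1$) is $\mathbf{e}_\emptyset=1$, so any nontrivial relation would average down to $1=0$. Alternatively one may invoke simplicity of $C\ell_n/(\omega-\varepsilon)$, but that leans on the structure theory one is trying to establish; the internal averaging argument is cleaner and self-contained.
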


We present an explicit matrix realization of $C\ell_n$ and consequently derive the realization of the spinor space $\mathcal{R}_n$, utilizing real division algebras.

\begin{theorem}\label{THm:main-988}
	Real division algebras are employed to furnish an explicit matrix realization of $C\ell_n$ and to articulate the corresponding realization of the spinor space $\mathcal{R}_n$. These realizations are tabulated in the introduction.
		 \end{theorem}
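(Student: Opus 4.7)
The plan is to invoke Theorem \ref{thm:construction} case by case for each residue $m \in \{0,1,\ldots,7\}$, writing $n = 8k+m$. For each case I will exhibit an $\mathbb{R}$-linear map $v : \mathbb{R}^n \to \mathbb{A}$ into the target algebra $\mathbb{A}$ listed in Table \ref{Tab:1}, verify $(v(x))^2 = -\|x\|^2$, and then apply the universal property to conclude $\mathbb{A} \cong C\ell_n$. The spinor space $\mathcal{R}_n$ then arises as the natural module on which $\mathbb{A}$ acts by evaluation.

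First I would handle the uniform octonionic skeleton. The first $8k$ basis vectors $\bfe_0,\ldots,\bfe_{8k-1}$ are sent to the operators $A_{\bfe_i}^s \in \mathrm{End}_{\mathbb{R}}((\mathbb{O}^2)^{\otimes k})$, where $s$ ranges over $1,\ldots,k$ and within each slot $s$ we run $i$ over $0,\ldots,7$. Items (1) and (2) of Lemma \ref{lemma:ABst} show that these operators anticommute across distinct slots and, within a single slot, satisfy $A_{\bfe_i}^s A_{\bfe_j}^s + A_{\bfe_j}^s A_{\bfe_i}^s = -2\delta_{ij}\,\mathrm{Id}$, since $(\bfe_i,\bfe_j) = \delta_{ij}$ in $\mathbb{O}$. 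This produces an embedding of $C\ell_{8k}$ into $\mathrm{End}_{\mathbb{R}}((\mathbb{O}^2)^{\otimes k})$, which already settles the case $m=0$.

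Next I would add the remaining $m$ generators by tensoring the skeleton with a base-case $C\ell_m$ realization. To guarantee anticommutativity with the skeleton, every new generator is multiplied by $E_k$ on the $(\mathbb{O}^2)^{\otimes k}$ factor, so that item (3) of Lemma \ref{lemma:ABst} kicks in automatically. The extra factor depends on $m$: for $m=1,2$ one uses $\mathbb{C}$ or $\mathbb{H}$ with left multiplication by the imaginary units; for $m=4,5,6$ one uses $\mathbb{H}^2$ with Pauli-type matrix-unit generators; for $m=3,7$ the extra factor splits into $\mathbb{H}^\pm$ or $\mathbb{O}^\pm$ corresponding to the direct sums in Table \ref{Tab:1}. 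In every case the additional generators are easily checked to satisfy $(v(x))^2=-\|x\|^2$ and to commute appropriately with the skeleton thanks to the $E_k$ prefactor.

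Finally, I would invoke Theorem \ref{thm:construction} together with a dimension count. In the non-split cases $m\neq 3,7$, the real dimension of the target equals $2^n$, so case (1) of Theorem \ref{thm:construction} applies and $\mathbb{A}\cong C\ell_n$. In the split cases $m=3,7$ (so $n\equiv 3\pmod 4$), the volume element $\bfe_0\bfe_1\cdots\bfe_{n-1}$ is central and squares to $1$; it acts as $\pm 1$ on the two summands, each of real dimension $2^{n-1}$, so case (2) of Theorem \ref{thm:construction} applies summand-by-summand. The spinor space in each case is then read off as the underlying module. The main obstacle is verifying that the $m$ extra generators indeed anticommute with the entire octonionic skeleton, which is delicate because of nonassociativity of $\mathbb{O}$; this is resolved cleanly by uniformly inserting $E_k$ and invoking item (3) of Lemma \ref{lemma:ABst}, while the $m=3,7$ splittings reduce to a direct computation of the volume element in the explicit representation.
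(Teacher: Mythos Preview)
Your proposal is correct and follows essentially the same route as the paper: verify $v(x)^2=-\|x\|^2$ via Lemma~\ref{lemma:ABst}, match real dimensions against $2^n$, and invoke Theorem~\ref{thm:construction}. The paper carries out the details only for $n=8k$ and $n=8k+3$ and declares the remaining cases analogous, just as you do.

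One point where your framing diverges slightly from the paper and should be tightened: in the split cases $m=3,7$ you say ``case~(2) of Theorem~\ref{thm:construction} applies summand-by-summand.'' The paper instead applies Theorem~\ref{thm:construction} once, to the full map $v$ into the direct sum $\mathbb{A}_1\oplus\mathbb{A}_2$, and computes that the volume element equals $(\mathrm{Id}_{\mathbb H}\otimes E_k,\,-\mathrm{Id}_{\mathbb H}\otimes E_k)$, which is \emph{not} a real scalar in $\mathbb{A}_1\oplus\mathbb{A}_2$. This rules out case~(2), so case~(1) holds: the subalgebra generated by $v(\mathbb{R}^n)$ has dimension $2^n$, equal to the ambient dimension, hence is all of $\mathbb{A}_1\oplus\mathbb{A}_2\cong C\ell_n$. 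Your observation that the volume element acts as $+1$ and $-1$ on the two summands is exactly the computation needed, but the logical conclusion is that case~(1), not case~(2), applies to the total map; applying case~(2) to each summand separately would still leave you to argue that the generated subalgebra of the direct sum is not a proper diagonal.
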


\begin{remark} \label{thm:CLs}
It should be noted that the realization can be detailed as follows.

	\begin{enumerate}
		\item For the real vector space $V_{8k} = \mathbb{R}^{8k} = \mathbb{O}^k$, we define a $\mathbb{R}$-linear embedding
		\[
		v_{8k}: V_{8k} \to \mathrm{End}_{\mathbb{R}}\left((\mathbb{O}^2)^{\otimes k}\right)
		\]
		specified by
		\[
		v_{8k}(p_1, p_2, \ldots, p_k) = \sum_{j=1}^k A_{p_j}^j,
		\]
		for $p_i \in \mathbb{O}, i = 1, \ldots, k$.

		This embedding $v_{8k}$ provides a realization of the Clifford algebra $C\ell_{8k}$, thereby establishing $(\mathbb{O}^2)^{\otimes k}$ as the real spinor space $\mathcal{R}_{8k}$.
		
		\item For the real vector space $V_{8k+1} = \mathbb{R}^{8k+1} = \mathbb{R}\oplus\mathbb{O}^{k}  $, we introduce a $\mathbb{R}$-linear embedding
		\[
		v_{8k+1}: V_{8k+1} \to \mathrm{End}_{\mathbb{C}}(\mathbb{C} \otimes (\mathbb{O}^2)^{\otimes k})
		\]
		defined by
		\[
		v_{8k+1}(r, p_1, p_2, \ldots, p_k) = L_{\mathrm{i}r} \otimes E_k + \sum_{j=1}^k \mathrm{Id}_{\mathbb{C}} \otimes A_{p_j}^j,
		\]
		where $p_j \in \mathbb{O}$ for $j = 1, \ldots, k$, and $r \in \mathbb{R}$. Here, $L_{\mathrm{i}r} \in \mathrm{End}_{\mathbb{C}}(\mathbb{C})$ denotes the left multiplication operator by $\mathrm{i}r$, which acts as $L_{\mathrm{i}r}(z) = \mathrm{i}rz$ for all $z \in \mathbb{C}$.

		This construction of $v_{8k+1}$ furnishes a realization of the Clifford algebra $C\ell_{8k+1}$, thereby identifying $\mathbb{C} \otimes (\mathbb{O}^2)^{\otimes k}$ as the corresponding spinor space $\mathcal{R}_{8k+1}$.

		\item For the real vector space $V_{8k+2} = \mathbb{R}^{8k+2} = \mathbb{R}^2 \oplus \mathbb{O}^{k}$, we establish a $\mathbb{R}$-linear embedding
		\[
		v_{8k+2}: V_{8k+2} \to \mathrm{End}_{\mathbb{H}}(\mathbb{H} \otimes (\mathbb{O}^2)^{\otimes k})
		\]
		defined by
		\[
		v_{8k+2}((x,y), p_1, p_2, \ldots, p_k) = L_{x\mathrm{i} + y\mathrm{j}} \otimes E_k + \sum_{j=1}^k \mathrm{Id}_{\mathbb{H}} \otimes A_{p_j}^j,
		\]
		where $p_i \in \mathbb{O}$ for $i = 1, \ldots, k$, and $x, y \in \mathbb{R}$. Here, $L_{x\mathrm{i} + y\mathrm{j}} \in \mathrm{End}_{\mathbb{H}}(\mathbb{H})$ represents the left multiplication operator by $x\mathrm{i} + y\mathrm{j}$, which acts as $L_{x\mathrm{i} + y\mathrm{j}}(q) = (x\mathrm{i} + y\mathrm{j})q$ for all $q \in \mathbb{H}$.

		This embedding $v_{8k+2}$ yields a realization of the Clifford algebra $C\ell_{8k+2}$, thus allowing $\mathbb{H} \otimes (\mathbb{O}^2)^{\otimes k}$ to be recognized as the spinor space $\mathcal{R}_{8k+2}$.
		
		\item For the real vector space $V_{8k+3} = \mathbb{R}^{8k+3} = \mathrm{Im}\mathbb{H} \oplus \mathbb{O}^{k}$, we introduce a $\mathbb{R}$-linear embedding
		\[
	\qquad \qquad 	v_{8k+3}: V_{8k+3} \to \mathrm{End}_{\mathbb{H}}(\mathbb{H}^+ \otimes (\mathbb{O}^2)^{\otimes k}) \oplus \mathrm{End}_{\mathbb{H}}(\mathbb{H}^- \otimes (\mathbb{O}^2)^{\otimes k})
		\]
		defined by
		\[
		\qquad \qquad  	v_{8k+3}(q, p_1, p_2, \ldots, p_k) = \left(R_q \otimes E_k + \sum_{j=1}^k \mathrm{Id}_{\mathbb{H}} \otimes A_{p_j}^j, R_{\bar{q}} \otimes E_k + \sum_{j=1}^k \mathrm{Id}_{\mathbb{H}} \otimes A_{p_j}^j\right),
		\]
		where $p_i \in \mathbb{O}$ for $i = 1, \ldots, k$, and $q \in \mathrm{Im}\mathbb{H}$.
		
		 This embedding $v_{8k+3}$ establishes a realization of the Clifford algebra $C\ell_{8k+3}$, allowing for $\mathbb{H}^\pm \otimes (\mathbb{O}^2)^{\otimes k}$ to serve as the spinor spaces $\mathcal{R}^\pm_{8k+3}$.

		\item For the real vector space $V_{8k+4} = \mathbb{R}^{8k+4} = \mathbb{H} \oplus \mathbb{O}^{k}$, a $\mathbb{R}$-linear embedding is defined as follows:
		\[
		v_{8k+4}: V_{8k+4} \to \mathrm{End}_{\mathbb{H}}(\mathbb{H}^2 \otimes (\mathbb{O}^2)^{\otimes k})
		\]
		by the mapping
		\[
		v_{8k+4}(q, p_1, p_2, \ldots, p_k) = \left(
		\begin{array}{cc}
			0 & R_q \\
			-R_{\overline{q}} & 0 \\
		\end{array}
		\right) \otimes E_k + \sum_{j=1}^k \mathrm{Id}_{\mathbb{H}^2} \otimes A_{p_j}^j,
		\]
		where $p_i \in \mathbb{O}$ for $i = 1, \ldots, k$, and $q \in \mathbb{H}$.

		This embedding $v_{8k+4}$ facilitates a realization of the Clifford algebra $C\ell_{8k+4}$, and as a result, $\mathbb{H}^2 \otimes (\mathbb{O}^2)^{\otimes k}$ is recognized as the spinor space $\mathcal{R}_{8k+4}$.
		
		\item  For the real vector space $V_{8k+5} = \mathbb{R}^{8k+5} = \mathbb{R} \oplus \mathbb{H} \oplus \mathbb{O}^{k}$, we establish a $\mathbb{R}$-linear embedding:
		\[
		v_{8k+5}: V_{8k+5} \to \mathrm{End}_{\mathbb{C}}(\mathbb{H}^2 \otimes (\mathbb{O}^2)^{\otimes k})
		\]
		utilizing the formulation:
		\[
		v_{8k+5}(r, q, p_1, p_2, \ldots, p_k) = \left(
		\begin{array}{cc}
			L_{\mathrm{i}r} & R_q \\
			-R_{\overline{q}} & -L_{\mathrm{i}r} \\
		\end{array}
		\right) \otimes E_k + \sum_{j=1}^k \mathrm{Id}_{\mathbb{H}^2} \otimes A_{p_j}^j,
		\]
		where $p_i \in \mathbb{O}$ for $i = 1, \ldots, k$, and $q \in \mathbb{H}, r \in \mathbb{R}$.
		
		The embedding $v_{8k+5}$ delivers a realization of the Clifford algebra $C\ell_{8k+5}$, thereby identifying $\mathbb{H}^2 \otimes (\mathbb{O}^2)^{\otimes k}$ as the corresponding spinor space $\mathcal{R}_{8k+5}$.

		\item For the real vector space $V_{8k+6} = \mathbb{R}^{8k+6} = \mathbb{R}^2 \oplus \mathbb{H} \oplus \mathbb{O}^{k}$, a $\mathbb{R}$-linear embedding is defined as follows:
		\[
		v_{8k+6}: V_{8k+6} \to \mathrm{End}_{\mathbb{R}}(\mathbb{H}^2 \otimes (\mathbb{O}^2)^{\otimes k})
		\]
		by the mapping
		\[
	\qquad\qquad 	v_{8k+6}((x,y), q, p_1, p_2, \ldots, p_k) = \left(
		\begin{array}{cc}
			L_{x\mathrm{i} + y\mathrm{j}} & R_q \\
			-R_{\overline{q}} & -L_{x\mathrm{i} + y\mathrm{j}} \\
		\end{array}
		\right) \otimes E_k + \sum_{j=1}^k \mathrm{Id}_{\mathbb{H}^2} \otimes A_{p_j}^j,
		\]
		where each $p_i \in \mathbb{O}$ for $i = 1, \ldots, k$, $q \in \mathbb{H}$, and $x, y \in \mathbb{R}$.
		
		This embedding $v_{8k+6}$ facilitates a realization of the Clifford algebra $C\ell_{8k+6}$, thereby designating $\mathbb{H}^2 \otimes (\mathbb{O}^2)^{\otimes k}$ as the spinor space $\mathcal{R}_{8k+6}$.

		\item For the vector space $V_{8k+7} = \mathbb{R}^{8k+7} = \mathrm{Im}\mathbb{O} \oplus \mathbb{O}^{k}$, a $\mathbb{R}$-linear embedding is specified as follows:
		\[
		v_{8k+7}: V_{8k+7} \to \mathrm{End}_{\mathbb{R}}(\mathbb{O}^+ \otimes (\mathbb{O}^2)^{\otimes k}) \oplus \mathrm{End}_{\mathbb{R}}(\mathbb{O}^- \otimes (\mathbb{O}^2)^{\otimes k})
		\]
		via the function
		\[
	\qquad\qquad 	v_{8k+7}(q, p_1, p_2, \ldots, p_k) = \left(R_q \otimes E_k + \sum_{j=1}^k \mathrm{Id}_{\mathbb{H}} \otimes A_{p_j}^j, R_{\overline{q}} \otimes E_k + \sum_{j=1}^k \mathrm{Id}_{\mathbb{H}} \otimes A_{p_j}^j\right),
		\]
		where $p_i \in \mathbb{O}$ for $i = 1, \ldots, k$, and $q \in \mathrm{Im}\mathbb{O}$.
		
		This embedding $v_{8k+7}$ facilitates the realization of the Clifford algebra $C\ell_{8k+7}$. Consequently, $\mathbb{O}^\pm \otimes (\mathbb{O}^2)^{\otimes k}$ is acknowledged as the spinor space $\mathcal{R}^\pm_{8k+7}$.
			\end{enumerate}

In conclusion, for the ease of our readers, all essential details concerning the space $V_n$
 and the mapping $v_n$
 across all eight cases have been systematically compiled and presented in Table 2.
  \end{remark}

\bigskip

\noindent \textbf{Convention:} \ \ For convenience, we will henceforth adopt the convention that the vector \(\eta\) in \(V_n\) is represented by \(v_n(\eta)\), as detailed in Remark \ref{thm:CLs}. Specifically, this means we identify
\[
\bfe_j \cong v_n(\bfe_j).
\]

\bigskip

\begin{proof}[Proof of Theorem \ref{THm:main-988}]
	Based on Theorem \ref{thm:construction}, for $n \neq 4k + 3$, it suffices to verify that the map $v_n$ fulfills $$v_n(x)^2 = -\|x\|^2$$ for all $x \in \mathbb{R}^n$, and that the real dimension of the image of $v_n$ equals $2^n$, matching the real dimension of the universal Clifford algebra $C\ell_n$.
	
	 This verification is mainly performed for the cases $n = 8k$ and $n = 8k + 3$, suggesting that the methodology for other cases is analogous.
	
	\begin{enumerate}
		\item {\bf Case $\mathbf{n = 8k}$:}

		By applying Lemma \ref{lemma:ABst}, we obtain
		\begin{equation*}
			v_{8k}(p_1, p_2, \ldots, p_k)^2 = \left( \sum_{j=1}^k A_{p_j}^j \right)^2 = -\left( \sum_{j=1}^k \|p_j\|^2 \right) \text{Id}_{(\mathbb{O}^2)^{\otimes k}}.
		\end{equation*}
	Additionally, the dimensionality condition is met:
	\[
	\text{dim}_{\mathbb{R}}\text{End}_{\mathbb{R}}\left((\mathbb{O}^2)^{\otimes k}\right) = 2^{8k} = \text{dim}_{\mathbb{R}}C\ell_{8k}.
	\]
	Thus, the mapping $v_{8k}$ provides a realization of $C\ell_{8k}$ according to Theorem \ref{thm:construction}, and $(\mathbb{O}^2)^{\otimes k}$ is recognized as the spinor space $\mathcal{R}_{8k}$, as outlined in Definition \ref{def:spinor} and Theorem \ref{thm:semi-simple}.
	
 \medskip

    \item {\bf Case  $\mathbf{n = 8k+3}$:}

  Utilizing Lemma \ref{lemma:ABst}, it is deduced that

    \begin{align*}
    \qquad 	v_{8k+3}(q, p_1, p_2, \ldots, p_k)^2 = &\left(R_q \otimes E_k + \sum_{j=1}^{k} Id_{\mathbb{H}} \otimes A_{p_j}^j, R_{\bar{q}} \otimes E_k + \sum_{j=1}^{k} Id_{\mathbb{H}} \otimes A_{p_j}^j\right)^2 \\
    	= &-\left(\|q\|^2 + \sum_{j=1}^k \|p_j\|^2\right) \left(Id_{\mathbb{H} \otimes (\mathbb{O}^2)^{\otimes k}}, Id_{\mathbb{H} \otimes (\mathbb{O}^2)^{\otimes k}}\right).
    \end{align*}
    Moreover, we observe that
    \[
    \bfe_0\bfe_1\cdots\bfe_{n-1} = \left(Id_{\mathbb{H}} \otimes E_k, -Id_{\mathbb{H}} \otimes E_k\right) \notin \mathbb{R}.
    \]
    Additionally, it is noted that
    \[
    \text{dim}_{\mathbb{R}}\left(End_{\mathbb{H}}(\mathbb{H}^+ \otimes (\mathbb{O}^2)^{\otimes k}) \oplus End_{\mathbb{H}}(\mathbb{H}^- \otimes (\mathbb{O}^2)^{\otimes k})\right) = 2^{8k+3} = \text{dim}_{\mathbb{R}}C\ell_{8k+3}.
    \]
    Accordingly, the map $v_{8k+3}$ establishes a realization of $C\ell_{8k+3}$ in line with Theorem \ref{thm:construction}, and $\mathbb{H}^\pm \otimes (\mathbb{O}^2)^{\otimes k}$ precisely represents the spinor space $\mathcal{R}_{8k+3}^\pm$, as specified in Definition \ref{def:spinor} and Theorem \ref{thm:semi-simple}.
     \end{enumerate}
\end{proof}

\begin{table}
  \centering
  \begin{tabular}[h]{l|l|l}
  \hline
  $m$ & \hskip20pt $V_{8k+m}$ & \hskip40pt  $v_{8k+m}$ \\
  \hline
  0 & $\mathbb{O}^k$ & $\sum_{j=1}^k A_{p_j}^j$ \\
  1 & $\mathbb{R}\oplus\mathbb{O}^{k}$ & $ L_{\mathrm{i}r} \otimes E_k + \sum_{j=1}^k \mathrm{Id}_{\mathbb{C}} \otimes A_{p_j}^j$ \\
  2 & $\mathbb{R}^2 \oplus \mathbb{O}^{k}$ & $ R_{x\mathrm{i} + y\mathrm{j}} \otimes E_k + \sum_{j=1}^k \mathrm{Id}_{\mathbb{H}} \otimes A_{p_j}^j$ \\
  3 & $\mathrm{Im}\mathbb{H} \oplus \mathbb{O}^{k}$ & $\left(R_q \otimes E_k + \sum_{j=1}^k \mathrm{Id}_{\mathbb{H}} \otimes A_{p_j}^j, R_{\bar{q}} \otimes E_k + \sum_{j=1}^k \mathrm{Id}_{\mathbb{H}} \otimes A_{p_j}^j\right)$ \\
  4 & $\mathbb{H} \oplus \mathbb{O}^{k}$ &$\left(
		\begin{array}{cc}
			0 & R_q \\
			-R_{\overline{q}} & 0 \\
		\end{array}
		\right) \otimes E_k + \sum_{j=1}^k \mathrm{Id}_{\mathbb{H}^2} \otimes A_{p_j}^j$  \\
  5 & $\mathbb{R}\oplus\mathbb{H} \oplus \mathbb{O}^{k}$ &  $\left(
		\begin{array}{cc}
			L_{\mathrm{i}r} & R_q \\
			-R_{\overline{q}} & -L_{\mathrm{i}r} \\
		\end{array}
		\right) \otimes E_k + \sum_{j=1}^k \mathrm{Id}_{\mathbb{H}^2} \otimes A_{p_j}^j$ \\
  6 & $\mathbb{R}^2\oplus\mathbb{H} \oplus \mathbb{O}^{k}$ & $\left(
		\begin{array}{cc}
			L_{x\mathrm{i} + y\mathrm{j}} & R_q \\
			-R_{\overline{q}} & -L_{x\mathrm{i} + y\mathrm{j}} \\
		\end{array}
		\right) \otimes E_k + \sum_{j=1}^k \mathrm{Id}_{\mathbb{H}^2} \otimes A_{p_j}^j$  \\
  7 & $\mathrm{Im}\mathbb{O} \oplus \mathbb{O}^{k}$ &$\left(R_q \otimes E_k + \sum_{j=1}^k \mathrm{Id}_{\mathbb{H}} \otimes A_{p_j}^j, R_{\overline{q}} \otimes E_k + \sum_{j=1}^k \mathrm{Id}_{\mathbb{H}} \otimes A_{p_j}^j\right)$  \\
  \hline
\end{tabular}
\bigskip
  \caption{The vector $V_n$ and the act of $V_{n}$}\label{Tab:2}
\end{table}

\begin{remark}
Here  we detail the explicit spaces of spinor spaces $\mathcal{R}_n$, where the term ``spinor spaces" refers to the irreducible $C\ell_n$-modules. The study of Spinors in $n$-dimensions was pioneered by Brauer and Weyl \cite{BW35}. Furthermore, the construction of what are termed basic spinor and semi-spinor modules, utilizing Lie algebra methods, has been elaborated upon by the authors in   \cite{BJ99}.
\end{remark}


\section{A Reformulation of Gilbert's Conjecture}
In this section, we introduce an alternate formulation of Gilbert's Conjecture that plays a crucial role in addressing the conjecture effectively.

\begin{theorem}\label{thm:redu-134}
	Gilbert's conjecture   holds if and only if  there exists an element \(\eta \in \mathbb{R}^n\) with \(\eta^2 = -1\) and a subspace \(\mathfrak{H}_0\) within \(\mathcal{R}_n\) such that:
	\begin{enumerate}
		\item[\((1)\)] The space \(\mathcal{R}_n\) can be decomposed into \(\mathfrak{H}_0 \oplus \eta\mathfrak{H}_0\),
		\item[\((2)\)] For each \(j = 1, \ldots, n-1\), it holds that \(\eta\mathbf{e}_0\mathbf{e}_j\mathfrak{H}_0 = \mathfrak{H}_0\) and \(\mathbf{e}_0\mathbf{e}_j\eta\mathfrak{H}_0 = \mathfrak{H}_0\).
	\end{enumerate}
	Furthermore, if \(\eta = \mathbf{e}_0\), condition (2) simplifies to \(\mathbf{e}_j\mathfrak{H}_0 = \mathfrak{H}_0\) for \(j = 1, \ldots, n-1\).
\end{theorem}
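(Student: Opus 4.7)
My plan is to pass to irreducible modules and then translate the two operator identities $\mathcal{R}\mathcal{B}\mathcal{C} = \mathrm{Id}_{L^p(\mathbb{R}^{n-1},\mathfrak{H}_0)}$ and $\mathcal{C}\mathcal{R}\mathcal{B} = \mathrm{Id}_{H^p(\mathbb{R}^n_+,\mathfrak{H})}$ (recorded in the Operator Condition above) into the pointwise algebraic conditions on $\mathcal{R}_n$. By semisimplicity of $C\ell_n$, every Clifford module decomposes as a finite direct sum of spinor modules, and both the splitting in (1) and the relations in (2) are compatible with direct sums; this reduces Gilbert's conjecture for a general $\mathfrak{H}$ to the spinor case, as indicated in the paragraph preceding the theorem. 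The bridge between operator and algebraic conditions comes from Theorem~\ref{thm:Hp}, which gives $\mathcal{B}\mathcal{C}f = \tfrac12(\mathrm{Id} + \mathbf{e}_0 \mathcal{H})f$ and $\mathcal{C}\mathcal{B} = \mathrm{Id}_{H^p}$, together with the identification $\mathcal{R} = 2 P_{\mathfrak{H}_0}$ where $P_{\mathfrak{H}_0}$ is the orthogonal projection onto $\mathfrak{H}_0$.

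For the forward direction, assume Gilbert's conjecture holds, so both operator identities are in force. Applying $\mathcal{R}\mathcal{B}\mathcal{C} = \mathrm{Id}$ to a test function $f(x) = \phi(x) v$ with scalar $\phi$ and $v \in \mathfrak{H}_0$ gives $P_{\mathfrak{H}_0}\bigl(\sum_{j=1}^{n-1} (R_j \phi)(x)\,\mathbf{e}_0\mathbf{e}_j v\bigr) = 0$ almost everywhere. Choosing $\phi$ whose Riesz transforms $R_1\phi,\ldots,R_{n-1}\phi$ are linearly independent at some point (a suitable Schwartz function will do) forces $P_{\mathfrak{H}_0}(\mathbf{e}_0\mathbf{e}_j v) = 0$ for every $j$ and every $v \in \mathfrak{H}_0$; that is, $\mathbf{e}_0\mathbf{e}_j\mathfrak{H}_0 \subseteq \eta\mathfrak{H}_0$, with equality because $\mathbf{e}_0\mathbf{e}_j$ is invertible and $\dim \mathfrak{H}_0 = \dim \eta\mathfrak{H}_0$. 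Left-multiplying by $\eta$ and using $\eta^2 = -1$ produces $\eta\mathbf{e}_0\mathbf{e}_j\mathfrak{H}_0 = \mathfrak{H}_0$, while left-multiplying by $\mathbf{e}_0\mathbf{e}_j$ and using $(\mathbf{e}_0\mathbf{e}_j)^2 = -1$ gives the companion relation $\mathbf{e}_0\mathbf{e}_j\eta\mathfrak{H}_0 = \mathfrak{H}_0$.

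For the reverse direction, assume (1) and (2). Every $\mathcal{R}_n$-valued function admits a pointwise decomposition $g = g_0 + \eta g_1$ with $g_0, g_1$ valued in $\mathfrak{H}_0$. For $f \in L^p(\mathbb{R}^{n-1},\mathfrak{H}_0)$, condition (2a) yields that each $\mathbf{e}_0\mathbf{e}_j R_j f$ is $\eta\mathfrak{H}_0$-valued, hence $P_{\mathfrak{H}_0}(\mathbf{e}_0\mathcal{H} f) = 0$ and $\mathcal{R}\mathcal{B}\mathcal{C}f = f$. For $F \in H^p$ with $g = \mathcal{B}F$, the identity $g = \mathbf{e}_0\mathcal{H} g$ from Theorem~\ref{thm:Hp}, split along $\mathcal{R}_n = \mathfrak{H}_0 \oplus \eta\mathfrak{H}_0$ via (2a) and (2b), yields $g_0 = \sum_j \mathbf{e}_0\mathbf{e}_j\eta R_j g_1$ and $\eta g_1 = \sum_j \mathbf{e}_0\mathbf{e}_j R_j g_0$. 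A short direct calculation then gives $\mathbf{e}_0\mathcal{H}(g_0 - \eta g_1) = -(g_0 - \eta g_1)$, so $(\mathrm{Id}+\mathbf{e}_0\mathcal{H})(g_0 - \eta g_1) = 0$ and hence $\mathcal{C}(g_0 - \eta g_1) = 0$; therefore $\mathcal{C}\mathcal{R}\mathcal{B}F = \mathcal{C}(2g_0) = \mathcal{C}(g_0 + \eta g_1) = \mathcal{C}\mathcal{B}F = F$. Combined with (1), the two resulting operator identities deliver all three parts of Gilbert's conjecture.

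The main obstacle is the pointwise-to-operator passage via the Riesz transforms. In the forward direction one needs scalar test functions whose Riesz transforms are linearly independent at a common point, so that the operator identity forces the algebraic identity on each generator $\mathbf{e}_0\mathbf{e}_j v$; in the reverse direction one must verify that the Clifford-Hilbert relation $g = \mathbf{e}_0\mathcal{H} g$ respects the decomposition $\mathcal{R}_n = \mathfrak{H}_0 \oplus \eta\mathfrak{H}_0$, which is precisely what conditions (2a) and (2b) guarantee.
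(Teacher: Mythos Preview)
Your proposal is correct and follows essentially the same route as the paper: reduce to the spinor module by semisimplicity, use Theorem~\ref{thm:Hp} to rewrite $\mathcal{B}\mathcal{C}$ as $\tfrac12(\mathrm{Id}+\mathbf{e}_0\mathcal{H})$, and pass between the operator identities and the algebraic conditions via scalar test functions whose Riesz transforms isolate individual $\mathbf{e}_0\mathbf{e}_j$. Two small remarks: your phrase ``linearly independent at some point'' is imprecise---the paper instead exhibits explicit Schwartz functions with $R_j\phi(0)=\delta_{ji}$ and then argues (via smoothness of Riesz transforms of Schwartz functions) that the a.e.\ identity may be evaluated at $x=0$; and your forward direction is in fact a slight streamlining of the paper's, since you extract both halves of condition~(2) from $\mathcal{R}\mathcal{B}\mathcal{C}=\mathrm{Id}$ alone by multiplying $\mathbf{e}_0\mathbf{e}_j\mathfrak{H}_0=\eta\mathfrak{H}_0$ on the left by $\eta$ and by $\mathbf{e}_0\mathbf{e}_j$, whereas the paper invokes the second operator identity $\mathcal{C}\mathcal{R}\mathcal{B}=\mathrm{Id}$ separately to obtain $\mathbf{e}_0\mathbf{e}_j\eta\mathfrak{H}_0=\mathfrak{H}_0$.
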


\begin{proof}[Proof of Theorem \ref{thm:redu-134}]{\textbf{($\Leftarrow$)}}
	First, we assume the existence of \(\eta \in \mathbb{R}^n\) with \(\eta^2 = -1\) and a subspace \(\mathfrak{H}_0 \subseteq \mathcal{R}_n\) such that
	\[\mathcal{R}_n = \mathfrak{H}_0 \oplus \eta\mathfrak{H}_0,\]
	and for every \(j = 1, \cdots, n-1\),
	\[\eta\mathbf{e}_0\mathbf{e}_j\mathfrak{H}_0 = \mathbf{e}_0\mathbf{e}_j\eta\mathfrak{H}_0 = \mathfrak{H}_0.\]
	
	We claim that for any function \(f \in L^p(\mathbb{R}^{n-1}, \mathfrak{H}_0)\),
	\begin{equation*}
		\mathbf{e}_0\mathcal{H}f \in L^p(\mathbb{R}^{n-1}, \eta\mathfrak{H}_0),
	\end{equation*}
	and
	\begin{equation*}
		\mathbf{e}_0\mathcal{H}\eta f \in L^p(\mathbb{R}^{n-1}, \mathfrak{H}_0).
	\end{equation*}
	
	Utilizing the definition of the Clifford-Riesz-Hilbert operator $$\mathcal{H} = \sum_{j=1}^{n-1} \mathbf{e}_j \mathcal R_j,$$ we find
	\begin{equation*}
		\mathbf{e}_0\mathcal{H}f = -\eta^2\mathbf{e}_0\sum_{j=1}^{n-1} \mathbf{e}_j \mathcal R_jf = -\eta\left(\sum_{j=1}^{n-1}\eta\mathbf{e}_0\mathbf{e}_j \mathcal R_jf\right).
	\end{equation*}
	Given that the Riesz transform \(\mathcal R_j\) is \(L^p\) bounded for \(p>1\), it follows that \(\mathcal R_jf \in L^p(\mathbb{R}^{n-1}, \mathfrak{H}_0)\). Combining this with assumption (2), we deduce \(\mathbf{e}_0\mathcal{H}f \in L^p(\mathbb{R}^{n-1}, \eta\mathfrak{H}_0)\) and similarly, \(\mathbf{e}_0\mathcal{H}\eta f \in L^p(\mathbb{R}^{n-1}, \mathfrak{H}_0)\). This establishes the claim.

To demonstrate the assertion in \eqref{Eq:opid} for any function $f \in L^p(\mathbb{R}^{n-1}, \mathfrak{H}_0)$, we find
\[
\mathcal{R}\mathcal B \mathcal C = \mathcal{R}\left(\frac{1}{2}\left(f + \mathbf{e}_0\mathcal{H}f\right)\right) = f.
\]
Consequently, it follows that $\mathbf{e}_0\mathcal{H}f \in L^p(\mathbb{R}^{n-1}, \eta\mathfrak{H}_0)$.

	And for any \(F \in H^p(\mathbb{R}_{+}^n, \mathcal{R}_n)\) with corresponding boundary function \(f = \mathcal BF \in L^p(\mathbb{R}^{n-1}, \mathcal{R}_n)\), we note that \(f\) decomposes into
	\[f = f_0 + \eta f_1, \quad f_0, f_1 \in L^p(\mathbb{R}^{n-1}, \mathfrak{H}_0).\]
	According to Theorem \ref{thm:Hp}, \(f = \mathbf{e}_0\mathcal{H}f\), leading to the equalities
	\[f_0 + \eta f_1 = \mathbf{e}_0\mathcal{H}f_0 + \mathbf{e}_0\mathcal{H}\eta f_1.\]
	By comparing components, we find
	\[f_0 = \mathbf{e}_0\mathcal{H}\eta f_1, \quad f_1 = -\eta\mathbf{e}_0\mathcal{H}f_0.\]
	Hence, by applying Theorem \ref{thm:Hp} once more, we conclude
\begin{eqnarray*}
 \mathcal C\mathcal{R} \mathcal BF&=& \mathcal C \mathcal B \mathcal C\mathcal{R} \mathcal BF=\mathcal C\mathcal B \mathcal C\mathcal{R}f=\mathcal C\mathcal B(2\mathcal Cf_0)=\mathcal C(f_0+\mathbf{e_0}\mathcal{H}f_0)\\
	&=& \mathcal C(f_0+\eta f_1)=\mathcal C(f)=F
\end{eqnarray*}
	for any $  F\in H^p(\mathbb{R}_{+}^n,\mathcal{R}_n).$

The continuity of the mapping $$ \mathcal{R}\mathcal B: H^p(\mathbb{R}^{n}_+, \mathcal{R}_n) \rightarrow L^p(\mathbb{R}^{n-1}, \mathfrak{H}_0)$$  is established directly through Theorem \ref{thm:bdv}. Specifically,
\[
\|\mathcal BF\|_{L^p(\mathbb{R}^{n-1}, \mathcal{R}_n)} = \lim_{t \rightarrow 0} \left( \int_{\mathbb{R}^{n-1}} |F(t, \underline{x})|^p \, \mathrm{d}x \right)^{\frac{1}{p}} \leqslant \sup_{t > 0} \left( \int_{\mathbb{R}^{n-1}} |F(t, \underline{x})|^p \, \mathrm{d}x \right)^{\frac{1}{p}} = \|F\|_{H^p(\mathbb{R}^{n}_+, \mathcal{R}_n)}.
\]
Hence, it follows that
\[
\|\mathcal{R} \mathcal BF\|_{L^p(\mathbb{R}^{n-1}, \mathfrak{H}_0)} \leqslant 2\|\mathcal BF\|_{L^p(\mathbb{R}^{n-1}, \mathcal{R}_n)} \leqslant 2\|F\|_{H^p(\mathbb{R}^{n}_+, \mathcal{R}_n)}.
\]
This assertion concludes the proof for the scenario where \(\mathfrak{H} = \mathcal{R}_n\).

Moreover, the validity of this proof extends to any Clifford module \(\mathfrak{H}\), leveraging the semi-simple characteristics of \(C\ell_n\). This extension is supported by the relation, as detailed in \cite[P.70]{GTM98}:
\[
\mathfrak{H} \cong \mathcal{R}_n \otimes_{\mathbb{R}} \text{Hom}_{C\ell_n}(\mathcal{R}_n, \mathfrak{H}) = \left( \mathfrak{H}_0 \oplus \eta \mathfrak{H}_0 \right) \otimes_{\mathbb{R}} \text{Hom}_{C\ell_n}(\mathcal{R}_n, \mathfrak{H}).
\]

	{\textbf{($\Rightarrow$)}}
Assuming the existence of an element \(\eta \in \mathbb{R}^n\) with \(\eta^2 = -1\) and a subspace \(\mathfrak{H}_0\) within \(\mathfrak{H} := \mathcal{R}_n\) that satisfy Gilbert's conjecture's conditions, let us consider
\[
\mathfrak{H}_0 = \text{span}_{\mathbb{R}}\{\xi_1, \xi_2, \ldots, \xi_m\},
\]
where \(\{\xi_1, \xi_2, \ldots, \xi_m\}\) forms an orthonormal basis of \(\mathfrak{H}_0\).

Let us consider a real-valued Schwartz function \(f \in \mathcal{S}(\mathbb{R}^{n-1})\), selected to satisfy the condition that for each \(j = 1, 2, \ldots, n-1\),
\[
\mathcal{R}_j f(0)   = \delta_{j1}.
\]
Notice that
\[
\mathcal{R}_j f(0) = \frac{2}{\omega_n} \int_{\mathbb{R}^{n-1}} \frac{-u_j}{|u|^n} f(u) \, \mathrm{d}u.
\]
An example of such a function is $$f(x) = c x_1 e^{-|x|^2},$$  where the constant \(c\) is determined to ensure \(\mathcal{R}_1 f(0) = 1\).

Consider the function \(g\) defined by   $$  g = \xi_1 f,$$   where \(f\) is a previously defined real-valued function. It follows that \(g \in L^p(\mathbb{R}^{n-1}, \mathfrak{H}_0)\) for \(p > 1\). Under the assumption that Gilbert's conjecture is valid, we have that
\[
\mathcal{R}\mathcal{B}\mathcal{C}g = g,
\]
indicating that \(g\) remains invariant under the operation of the composition of the operators \(\mathcal{R}\), \(\mathcal{B}\), and \(\mathcal{C}\).
This yields
\begin{equation}\label{eq:everyw}
	\mathcal{R}\left(\frac{1}{2}\left(g + \mathbf{e}_0\mathcal{H}g\right)\right) = g,
\end{equation}
which implies that $$\mathbf{e}_0\mathcal{H}g \in L^p(\mathbb{R}^{n-1}, \eta\mathfrak{H}_0).$$  Specifically,
\begin{equation}
	\mathbf{e}_0\mathcal{H}g(0) =  \mathbf{e}_0\sum_{j=1}^{n-1}\mathbf{e}_j\mathcal R_jg(0) = \mathbf{e}_0\mathbf{e}_1\xi_1 \in \eta\mathfrak{H}_0.
\end{equation}
Thus, it follows that $\eta\mathbf{e}_0\mathbf{e}_1\xi_1 \in \mathfrak{H}_0$. By similar reasoning,
\[
\eta\mathbf{e}_0\mathbf{e}_j\mathfrak{H}_0 \subseteq \mathfrak{H}_0
\]
for each $j = 1, \ldots, n-1$. Given that $\eta\mathbf{e}_0\mathbf{e}_j$ are invertible, we have
\[
\dim_{\mathbb{R}} \eta\mathbf{e}_0\mathbf{e}_j\mathfrak{H}_0 = \dim_{\mathbb{R}} \mathfrak{H}_0,
\]
leading to
\[
\eta\mathbf{e}_0\mathbf{e}_j\mathfrak{H}_0 = \mathfrak{H}_0
\]
for each $j = 1, \ldots, n-1$.

Moreover, for any function \(F \in H^p(\mathbb{R}_{+}^n, \mathcal{R}_n)\), consider the transformation defined by \(f = \mathcal{B}F\), which ensures that
\[
f \in L^p(\mathbb{R}^{n-1}, \mathcal{R}_n).
\]
This formulation implies that the function \(f\) can be expressed as a decomposition of the form
\[
f = f_0 + \eta f_1,
\]
where \(f_0, f_1 \in L^p(\mathbb{R}^{n-1}, \mathfrak{H}_0)\)

Consequently, we obtain the following relationship for \(f\):
\[
f = \mathcal{B}F = \mathcal{B}\mathcal{C}\mathcal{R}\mathcal{B}F = \mathcal{B}\mathcal{C}\mathcal{R}(f_0 + \eta f_1) = \mathcal{B}\mathcal{C}(2f_0) = f_0 + \mathbf{e}_0\mathcal{H}f_0.
\]
This equation establishes the relationship
\[
\eta f_1 = \mathbf{e}_0\mathcal{H}f_0.
\]
Moreover, applying the operator \(\mathbf{e}_0\mathcal{H}\) to \(\eta f_1\) yields
\[
\mathbf{e}_0\mathcal{H}\eta f_1 = f_0 \in L^p(\mathbb{R}^{n-1}, \mathfrak{H}_0).
\]
Following a similar argument, we deduce that
\[
\mathbf{e}_0\mathbf{e}_j\eta\mathfrak{H}_0 = \mathfrak{H}_0
\]
for each \(j = 1, \ldots, n-1\), indicating a systematic relationship between these operators and the functional space \(L^p(\mathbb{R}^{n-1}, \mathfrak{H}_0)\).

This completes the proof.
\end{proof}

\begin{remark}
	This remark elucidates two aspects of our analysis:
	
	\begin{enumerate}
		\item Concerning equation \eqref{eq:everyw}, Theorem \ref{thm:Hp} posits that
		\[
	\mathcal	B( \mathcal C f(z)) = \frac{1}{2}\big(I + \mathbf{e}_0\mathcal{H}\big)f(x)
		\]
		for almost every \(x \in \mathbb{R}^{n-1}\). By applying \eqref{eq:everyw} universally across \(\mathbb{R}^{n-1}\), and specifically at \(x = 0\), we deduce that
		\[
	\mathcal	B( \mathcal C f) (x) = \lim_{z \to x \, n.t.} \frac{1}{2} \Big( P_t \ast (Id + \mathbf{e}_0\mathcal{H})g\Big)(y),
		\]
		where \(z = (t, y) \in \mathbb{R}^n\).

We observe that \(f\) is a Schwartz function, and its Riesz transform \(\mathcal{R}_j f\) can be represented as a convolution with a tempered distribution \(W_j\), defined by
\[
\langle W_j, \varphi\rangle = \frac{2}{w_{n+1}} \lim_{\epsilon \to 0} \int_{\abs{y} \geqslant \epsilon} \frac{y_j}{\abs{y}^{n+1}} \varphi(y) \, \mathrm{d}y,
\]
as described in \cite[P.325]{GTM249}. According to \cite[P.23, Theorem 3.13]{SW71}, \(\mathcal{R}_j f\) is smooth. Furthermore, as outlined in \cite[P.76]{GSM29},
\[
\mathcal{R}_j f = \left(-\frac{\mathrm{i} \xi_j}{\abs{\xi}} \hat{f} \right)^{\vee}(x),
\]
implying that
\[
\|\mathcal{R}_j f\|_{\infty} \leqslant \left\|\frac{\mathrm{i} \xi_j}{\abs{\xi}} \hat{f}\right\|_1 \leqslant \|\hat{f}\|_1.
\]
This demonstrates that \(f\) and \(\mathcal{R}_j f\) are smooth and bounded. Consequently, for functions \(g\) and its Hilbert transform \(\mathcal{H}g\), their Poisson integrals are guaranteed to converge non-tangentially everywhere, as detailed in \cite[P.62, Theorem 3.16]{SW71}.

\medskip

		\item
		Further insights are provided into the structural properties of the space \(\mathcal{R}_n\), detailed as follows:
		\begin{enumerate}
			\item The space \(\mathcal{R}_n\) can be decomposed into the sum \(\mathfrak{H}_0 \oplus \eta\mathfrak{H}_0\), illustrating that the real dimension of \(\mathfrak{H}_0\) is half that of \(\mathcal{R}_n\), i.e.,
			\[
			\dim_{\mathbb{R}}\mathfrak{H}_0 = \frac{1}{2} \dim_{\mathbb{R}}\mathcal{R}_n.
			\]
			
			\item The identities \(\eta\mathbf{e}_0\mathbf{e}_j\mathfrak{H}_0 = \mathfrak{H}_0\) and \(\mathbf{e}_0\mathbf{e}_j\eta\mathfrak{H}_0 = \mathfrak{H}_0\) for each \(j = 1, \ldots, n-1\) introduce a \(C\ell_{n-2}\) module structure on \(\mathfrak{H}_0\). This endows \(\mathfrak{H}_0\) with a dimensionality that satisfies
			\[
			\dim_{\mathbb{R}}\mathfrak{H}_0 \geqslant \dim_{\mathbb{R}}\mathcal{R}_{n-2}.
			\]
		 Further discussion on this module structure will be presented in the following section.
		\end{enumerate}
		\end{enumerate}
		 \end{remark}

\section{The failure of Gilbert's Conjecture for \(n = 8k + 6, 8k + 7\)}
This section establishes the infeasibility of Gilbert's Conjecture for the dimensions \(n = 8k + 6, 8k + 7\).

\begin{theorem}\label{Thm:false}
	Gilbert's Conjecture fails for dimensions \(n = 8k + 6\) and \(n = 8k + 7\), where \(k \in \mathbb{N}\). This conclusion follows from the requirement that for any \(\eta \in \mathbb{R}^n\) and subspace \(\mathfrak{H}_0\) of \(\mathcal{R}_n\) satisfying the conjecture's criteria, the following inequality must hold:
	\[\dim_{\mathbb{R}}\mathcal{R}_n \geqslant  2  \dim_{\mathbb{R}}\mathcal{R}_{n-2}.\]
\end{theorem}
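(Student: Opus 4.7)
My plan is to invoke the algebraic reformulation in Theorem \ref{thm:redu-134}. Assuming Gilbert's conjecture held for $\mathcal{R}_n$, there would exist $\eta \in \mathbb{R}^n$ with $\eta^2 = -1$ and a subspace $\mathfrak{H}_0 \subseteq \mathcal{R}_n$ satisfying (1) $\mathcal{R}_n = \mathfrak{H}_0 \oplus \eta\mathfrak{H}_0$ and (2) $\eta\mathbf{e}_0\mathbf{e}_j\mathfrak{H}_0 = \mathbf{e}_0\mathbf{e}_j\eta\mathfrak{H}_0 = \mathfrak{H}_0$ for every $j = 1,\ldots,n-1$. From these I will manufacture a $C\ell_{n-2}$-module structure on $\mathfrak{H}_0$, which forces
\[
\dim_{\mathbb{R}}\mathcal{R}_n = 2\dim_{\mathbb{R}}\mathfrak{H}_0 \geqslant 2\dim_{\mathbb{R}}\mathcal{R}_{n-2}.
\]
A comparison against Table \ref{Tab:1} will then show that this inequality is violated precisely when $n \equiv 6$ or $7 \pmod 8$, proving the failure of the conjecture in these residue classes.

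The key observation is that condition (2) makes each $\mathbf{e}_0\mathbf{e}_j$ an \emph{off-diagonal} operator with respect to the splitting $\mathcal{R}_n = \mathfrak{H}_0 \oplus \eta\mathfrak{H}_0$. Indeed, using $\eta^{-1} = -\eta$, the first identity in (2) rearranges to $\mathbf{e}_0\mathbf{e}_j\mathfrak{H}_0 = \eta\mathfrak{H}_0$, while the second reads $\mathbf{e}_0\mathbf{e}_j(\eta\mathfrak{H}_0) = \mathfrak{H}_0$. Consequently, the composition of any two such operators is diagonal, i.e., preserves $\mathfrak{H}_0$. For indices $j, k \in \{1,\ldots,n-1\}$ the Clifford computation
\[
\mathbf{e}_0\mathbf{e}_j\mathbf{e}_0\mathbf{e}_k = -\mathbf{e}_0^2\mathbf{e}_j\mathbf{e}_k = \mathbf{e}_j\mathbf{e}_k
\]
shows that every bivector $\mathbf{e}_j\mathbf{e}_k$ restricts to an endomorphism of $\mathfrak{H}_0$. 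Setting $T_j := \mathbf{e}_1\mathbf{e}_{j+1}$ for $j = 1, \ldots, n-2$ therefore yields $n-2$ endomorphisms of $\mathfrak{H}_0$, and the defining anti-commutation relations of $C\ell_n$ immediately give $T_j^2 = -\mathrm{Id}$ together with $T_jT_k + T_kT_j = 0$ for $j \neq k$. In light of Definition \ref{def:CM}, this exhibits $\mathfrak{H}_0$ as a $C\ell_{n-2}$-module, and since every nontrivial $C\ell_{n-2}$-module contains an irreducible summand we conclude $\dim_{\mathbb{R}}\mathfrak{H}_0 \geqslant \dim_{\mathbb{R}}\mathcal{R}_{n-2}$ (using $\dim_{\mathbb{R}}\mathcal{R}^+_{n-2} = \dim_{\mathbb{R}}\mathcal{R}^-_{n-2}$ in the split cases $n-2 \equiv 3, 7 \pmod 8$).

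To close, I consult Table \ref{Tab:1}. For $n = 8k+6$ the relevant dimensions are $\dim_{\mathbb{R}}\mathcal{R}_{8k+6} = 8 \cdot 16^k$ and $\dim_{\mathbb{R}}\mathcal{R}_{8k+4} = 8 \cdot 16^k$; for $n = 8k+7$, similarly $\dim_{\mathbb{R}}\mathcal{R}_{8k+7} = 8 \cdot 16^k$ and $\dim_{\mathbb{R}}\mathcal{R}_{8k+5} = 8 \cdot 16^k$. In both cases $2\dim_{\mathbb{R}}\mathcal{R}_{n-2} = 16^{k+1} > 8 \cdot 16^k = \dim_{\mathbb{R}}\mathcal{R}_n$, so the inequality fails and no admissible pair $(\eta, \mathfrak{H}_0)$ can exist, contradicting Theorem \ref{thm:redu-134}. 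The main subtlety is spotting the off-diagonal character of $\mathbf{e}_0\mathbf{e}_j$, which converts the raw algebraic conditions of (2) into an honest Clifford-module structure on $\mathfrak{H}_0$; once this viewpoint is in hand, the remaining anti-commutation checks and dimension comparisons are routine.
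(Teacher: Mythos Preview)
Your proof is correct and follows essentially the same route as the paper's: both arguments observe that the conditions of Theorem~\ref{thm:redu-134} make each $\mathbf{e}_0\mathbf{e}_j$ swap $\mathfrak{H}_0$ and $\eta\mathfrak{H}_0$, so that the products $(\mathbf{e}_0\mathbf{e}_1)(\mathbf{e}_0\mathbf{e}_j)=\mathbf{e}_1\mathbf{e}_j$ preserve $\mathfrak{H}_0$ and satisfy the $C\ell_{n-2}$ relations, forcing $\dim_{\mathbb{R}}\mathfrak{H}_0\geqslant\dim_{\mathbb{R}}\mathcal{R}_{n-2}$; the dimension comparison from Table~\ref{Tab:1} then rules out $n\equiv 6,7\pmod 8$. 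Your ``off-diagonal'' phrasing and the explicit verification of the anti-commutation relations are a bit more detailed than the paper's sketch, but the argument is the same.
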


\begin{proof}
	The assertion that
	\[\eta \mathbf{e}_0 \mathbf{e}_j \mathfrak{H}_0 = \mathfrak{H}_0 \quad \text{and} \quad \mathbf{e}_0 \mathbf{e}_j \eta \mathfrak{H}_0 = \mathfrak{H}_0, \quad j = 1, \ldots, n-1,\]
	induces a \(C\ell_{n-2}\) module structure on \(\mathfrak{H}_0\). Define \(g_j = \mathbf{e}_0 \mathbf{e}_j, j = 1, \ldots, n-1\), making these conditions equivalent to
	\begin{equation}\label{eq:g}
		g_j \mathfrak{H}_0 = \eta \mathfrak{H}_0 \quad \text{and} \quad g_j \eta \mathfrak{H}_0 = \mathfrak{H}_0, \quad j = 1, \ldots, n-1.
	\end{equation}	
We assert that the algebra generated by \(\{g_j = \mathbf{e}_0 \mathbf{e}_j, j = 1, \ldots, n-1\}\) is isomorphic to \(C\ell^{even} \cong C\ell_{n-1}\). This follows from standard Clifford algebra results, but a brief demonstration is given for completeness.
	
	Consider the embedding \(v: \mathbb{R}^{n-1} \rightarrow \mathbb{A}\) defined by \(v((x_1, \ldots, x_{n-1})) = \sum_{j=1}^{n-1} x_j g_j\), yielding
	\begin{equation*}
		v((x_1, \ldots, x_{n-1}))^2 = -\lVert x \rVert^2.
	\end{equation*}
	When \(n\) is odd (\(n-1\) is even), \(v\) is an algebra isomorphism. When \(n\) is even (\(n-1\) is odd), \(g_1 \cdots g_{n-1} \notin \mathbb{R}\), indicating \(\mathbb{A} \cong C\ell_{n-1}\).

	Furthermore, following condition \eqref{eq:g} $$g_ig_j\mathfrak{H}_0=g_i\eta\mathfrak{H}_0=\mathfrak{H}_0, $$ we observe that \(\mathfrak{H}_0\) acts as a module over the algebra generated by the set \(\{g_1g_j, j=2, \ldots, n-1\}\). This algebra is notably isomorphic to \(C\ell_{n-2}\).
	
	Hence, the real dimension of \(\mathfrak{H}_0\) satisfies the following relations:
	\begin{align*}
		\dim_{\mathbb{R}}\mathfrak{H}_0 &\geqslant \dim_{\mathbb{R}}\mathcal{R}_{n-2}, \\
		\dim_{\mathbb{R}}\mathfrak{H}_0 &= \frac{1}{2} \dim_{\mathbb{R}}\mathcal{R}_n.
	\end{align*}
	
	These relationships culminate in a critical dimensionality condition:
	\begin{equation}\label{eq:dimineq}
		\dim_{\mathbb{R}}\mathcal{R}_n \geqslant 2 \dim_{\mathbb{R}}\mathcal{R}_{n-2}.
	\end{equation}
	
	By invoking Remark \ref{thm:CLs}, we deduce  that the dimensions of \(\mathcal{R}_n\) exhibit a specific pattern:
	\begin{equation*}
		\dim_{\mathbb{R}}\mathcal{R}_{8k+4} = \dim_{\mathbb{R}}\mathcal{R}_{8k+5} = \dim_{\mathbb{R}}\mathcal{R}_{8k+6} = \dim_{\mathbb{R}}\mathcal{R}_{8k+7^{\pm}} = 128k.
	\end{equation*}
	
	This observation leads to the conclusion that the dimensionality condition expressed in inequality 	\eqref{eq:dimineq} fails to hold for these specific cases. Consequently, Gilbert's conjecture is disproven for these instances.
	\end{proof}

\section{Validity of Gilbert's Conjecture for   \(n \neq 8k+6\) or \(8k+7\) }

In this section, we explore the scenarios in which Gilbert's conjecture is upheld, except for the specified forms where \(n\) is an integer multiple of 8 plus 6 or 7.

\begin{theorem}\label{Thm:true}
	Gilbert's conjecture is affirmed for cases where \(n \neq 8k+6\) and \(8k+7\), with \(k\) being any natural number.
\end{theorem}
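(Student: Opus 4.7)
The plan is to apply the reformulation Theorem~\ref{thm:redu-134}: it suffices to exhibit, for each admissible residue $m \in \{0,1,2,3,4,5\}$ modulo $8$, an element $\eta \in \mathbb{R}^n$ with $\eta^2 = -1$ and a real subspace $\mathfrak{H}_0 \subset \mathcal{R}_n$ satisfying $\mathcal{R}_n = \mathfrak{H}_0 \oplus \eta\mathfrak{H}_0$ together with the invariance condition $\eta \mathbf{e}_0 \mathbf{e}_j \mathfrak{H}_0 = \mathbf{e}_0 \mathbf{e}_j \eta \mathfrak{H}_0 = \mathfrak{H}_0$ for $j = 1, \ldots, n-1$. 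I will handle the six residues one at a time using the explicit matrix realizations of $v_n$ from Remark~\ref{thm:CLs} together with the choice of $\eta$ prescribed in Table~\ref{Tab:1}. The extension from the spinor module $\mathcal{R}_n$ to an arbitrary Clifford module $\mathfrak{H}$ then follows, as in the $(\Leftarrow)$ direction of Theorem~\ref{thm:redu-134}, from the decomposition $\mathfrak{H} \cong \mathcal{R}_n \otimes_{\mathbb{R}} \mathrm{Hom}_{C\ell_n}(\mathcal{R}_n, \mathfrak{H})$.

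For the four residues $m \in \{0,1,2,4\}$, Table~\ref{Tab:1} prescribes $\eta = \mathbf{e}_0$, so Theorem~\ref{thm:redu-134} reduces the invariance requirement to the single condition $\mathbf{e}_j \mathfrak{H}_0 = \mathfrak{H}_0$ for $j = 1, \ldots, n-1$, and natural ``diagonal/real'' choices will suffice. I intend to take
\[
\mathfrak{H}_0 \;=\; \{(a,a) : a \in \mathbb{O}\} \otimes (\mathbb{O}^2)^{\otimes(k-1)}\quad\text{for } n=8k,
\]
$\mathfrak{H}_0 = \mathbb{R} \otimes (\mathbb{O}^2)^{\otimes k}$ for $n = 8k+1$, $\mathfrak{H}_0 = \mathrm{span}_{\mathbb{R}}\{1, \mathrm{j}\} \otimes (\mathbb{O}^2)^{\otimes k}$ for $n = 8k+2$, and the quaternionic diagonal $\mathfrak{H}_0 = \{(q,q) : q \in \mathbb{H}\} \otimes (\mathbb{O}^2)^{\otimes k}$ for $n = 8k+4$. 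In every instance $\mathbf{e}_0$ visibly carries $\mathfrak{H}_0$ to a complementary subspace; invariance under the tensor-trailing operators $\mathrm{Id} \otimes A_p^s$ with $s \geq 2$ is automatic because they do not touch the leading tensor factor, while invariance under the generators that act on the leading factor follows from identities such as $M_{\mathbf{e}_m}(a,a) = (a\mathbf{e}_m, a\mathbf{e}_m)$ and $L_{\mathrm{j}}(\mathrm{span}_{\mathbb{R}}\{1,\mathrm{j}\}) = \mathrm{span}_{\mathbb{R}}\{1, \mathrm{j}\}$.

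The residues $m = 3$ and $m = 5$ are subtler, because Table~\ref{Tab:1} prescribes $\eta = \mathbf{e}_1 \neq \mathbf{e}_0$ and the twisted composites $\eta \mathbf{e}_0 \mathbf{e}_j$ must be analyzed. For $n = 8k+3$, a direct calculation using the quaternionic identity $R_{\mathrm{j}} R_{\mathrm{i}} = R_{\mathrm{k}}$, the relation $E_k^2 = \mathrm{Id}$, and the anticommutation $A_p^s E_k + E_k A_p^s = 0$ from Lemma~\ref{lemma:ABst}(3) collapses the invariance requirement to three conditions: $\mathfrak{H}_0$ must be stable under (i) $R_{\mathrm{i}} \otimes \mathrm{Id}$, (ii) $\mathrm{Id} \otimes E_k$, and (iii) $R_{\mathrm{k}} \otimes B_\ell$ for each octonionic Clifford generator $B_\ell$. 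Since $R_{\mathrm{k}}$ interchanges $\mathbb{C}_{\mathrm{i}} := \mathrm{span}_{\mathbb{R}}\{1, \mathrm{i}\}$ with $\mathrm{span}_{\mathbb{R}}\{\mathrm{j}, \mathrm{k}\}$, and each $B_\ell$ swaps the $\pm 1$ eigenspaces $V_\pm$ of $E_k$ on $(\mathbb{O}^2)^{\otimes k}$, I will take
\[
\mathfrak{H}_0 \;=\; \mathbb{C}_{\mathrm{i}} \otimes V_+ \;\oplus\; \mathrm{span}_{\mathbb{R}}\{\mathrm{j}, \mathrm{k}\} \otimes V_-,
\]
which has the required real dimension $\tfrac{1}{2}\dim_{\mathbb{R}} \mathcal{R}^{\pm}_{8k+3}$ and fulfils (i)--(iii) by inspection; applying $\eta = R_{\mathrm{j}} \otimes E_k$ then produces the complementary summand $\mathrm{span}_{\mathbb{R}}\{\mathrm{j},\mathrm{k}\} \otimes V_+ \oplus \mathbb{C}_{\mathrm{i}} \otimes V_-$. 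A parallel construction on $\mathbb{H}^2 \otimes (\mathbb{O}^2)^{\otimes k}$, with the scalar $R_{\mathrm{j}}$ replaced by the off-diagonal block avatar of $\eta$ dictated by Table~\ref{Tab:2}, will handle $n = 8k+5$.

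The main obstacle is precisely the two cases $m = 3, 5$: unlike the earlier residues, no pure tensor subspace $U \otimes W$ can satisfy the invariance requirements, because the subalgebra of $\mathrm{End}_{\mathbb{R}}(\mathbb{H})$ generated by $R_{\mathrm{i}}$ and $R_{\mathrm{k}}$ is all of $\mathbb{H}$ and admits no proper invariant subspace of $\mathbb{H}$. The construction must therefore genuinely couple the quaternionic and octonionic factors through the chirality element $E_k$, and identifying the correct ``diagonal-plus-chirality'' shape of $\mathfrak{H}_0$ is the crux; once this is done, the verifications reduce to routine bookkeeping with the alternativity identities from Section~3.
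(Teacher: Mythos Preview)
Your proposal is correct and follows the paper's overall strategy: reduce via Theorem~\ref{thm:redu-134} and then, for each admissible residue $m$, exhibit an explicit $\eta$ and $\mathfrak{H}_0$ using the matrix realizations of Remark~\ref{thm:CLs}. For $m\in\{0,1,2,4\}$ your choices of $\mathfrak{H}_0$ coincide verbatim with the paper's. The one substantive difference is at $m=3$ and $m=5$: the paper singles out the \emph{first} $\mathbb{O}^2$ tensor factor and couples the quaternionic part to its $E$-eigenspaces $\binom{\mathbb{O}}{0},\binom{0}{\mathbb{O}}$, taking for instance
\[
\mathfrak{H}_0=\Bigl[\mathbb{C}_{\mathrm{i}}\otimes\tbinom{\mathbb{O}}{0}\;\oplus\;\mathrm{span}\{\mathrm{j},\mathrm{k}\}\otimes\tbinom{0}{\mathbb{O}}\Bigr]\otimes(\mathbb{O}^2)^{\otimes(k-1)}
\]
when $m=3$, whereas you couple to the \emph{global} chirality splitting $(\mathbb{O}^2)^{\otimes k}=V_+\oplus V_-$ of $E_k$. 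Your variant is cleaner: since every octonionic generator $A_p^s$ anticommutes with $E_k$ by Lemma~\ref{lemma:ABst}(3), each one swaps $V_\pm$ uniformly, so the verification of $\eta\mathbf{e}_0\mathbf{e}_j\mathfrak{H}_0=\mathfrak{H}_0$ for the octonionic $\mathbf{e}_j$ is a single argument independent of $s$. In the paper's version the generators with $s\geq 2$ act as the identity on the distinguished first $\mathbb{O}^2$ factor while $\eta\mathbf{e}_0=R_{\mathrm{k}}\otimes\mathrm{Id}$ still swaps $\mathbb{C}_{\mathrm{i}}\leftrightarrow\mathrm{span}\{\mathrm{j},\mathrm{k}\}$, so the invariance check for those generators is more delicate than the paper's brief sentence suggests; your global-chirality construction sidesteps this entirely. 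Both routes yield half-dimensional subspaces meeting the conditions of Theorem~\ref{thm:redu-134}, so either completes the proof.
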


\noindent {\bf Convention:} \ \ In the proof of Theorem \ref{Thm:true}, a notational convention is adopted where $$\bfe_i^{(j)} \in V_1 \oplus V_2 \oplus \cdots \oplus V_n$$  represents the \((i+1)\)-th canonical orthonormal  basis vector of \(V_j\) for every \(i=0, 1, \cdots, \dim_{\mathbb R} V_j-1 \) and  \(j=1, \cdots, n\).

\begin{proof}[Proof of Theorem \ref{Thm:true}] We will give a specific choice of $\eta$ and $\mathfrak{H}_0\subset \mathcal{R}_n$ by   Remark  \ref{thm:CLs}.
\begin{enumerate}
  \item When \(n = 8k\) for \(k \in \mathbb{N}_+\), take \(\eta = \mathbf{e}^{(1)}_0 = (1,0,\ldots,0) \in \mathbb{O}^{k} = \mathbb{R}^{8k}\), and let
  \[
  \mathfrak{H}_0 := \mathbb{O}\begin{pmatrix} 1 \\ 1 \end{pmatrix} \otimes (\mathbb{O}^2)^{\otimes (k-1)}.
  \]

 Given the definition of \(v_{8k}\) as specified in Table \ref{Tab:1}, we derive the following relation:
 \begin{equation*}
 	\eta \mathfrak{H}_0 = \left\{ \left( \begin{matrix} 0 & 1 \\ -1 & 0 \end{matrix} \right) \otimes E_{k-1} \left( \begin{matrix} p \\ p \end{matrix} \right) \otimes (\mathbb{O}^2)^{\otimes (k-1)} : p \in \mathbb{O} \right\} = \mathbb{O} \begin{pmatrix} 1 \\ -1 \end{pmatrix} \otimes (\mathbb{O}^2)^{\otimes (k-1)},
 \end{equation*}
 leading to the space \(\mathcal{R}_{8k}\) being represented as
 \begin{equation*}
 	\mathcal{R}_{8k} = \mathfrak{H}_0 \oplus \eta \mathfrak{H}_0.
 \end{equation*}

 According to Theorem \ref{thm:redu-134}, to establish the desired property, it is sufficient to demonstrate that \(\mathbf{e}_j \mathfrak{H}_0 = \mathfrak{H}_0\) for \(j = 1, \ldots, n-1\). Based on \(v_{8k}\)'s definition, it naturally follows that \(\mathbf{e}^{(j)}_i \mathfrak{H}_0 = \mathfrak{H}_0\) for \(i = 0, \ldots, 7\), and \(j = 2, \ldots, k\).

 For illustrative purposes, we detail the case of \(\mathbf{e}^{(2)}_i \mathfrak{H}_0 = \mathfrak{H}_0\) as an example:
 \begin{equation*}
 \hspace{35pt}	v_{8k}(\mathbf{e}_j^{(2)}) \mathfrak{H}_0 = \left[ \text{Id}_{\mathbb{O}^2}\otimes\left( \begin{matrix} 0 & R_{\mathbf{e}_j} \\ -R_{\bar{\mathbf{e}}_j} & 0 \end{matrix} \right) \otimes E_{k-2} \right] \left( \begin{matrix} p \\ p \end{matrix} \right) \otimes (\mathbb{O}^2)^{\otimes (k-1)} = \left( \begin{matrix} p \\ p \end{matrix} \right) \otimes (\mathbb{O}^2)^{\otimes (k-1)}.
 \end{equation*}

   Furthermore,
  \[
  \qquad\qquad  v_{8k}(\mathbf{e}_j^{(1)}) \mathfrak{H}_0 = \left[ \left( \begin{matrix} 0 & R_{\mathbf{e}_j} \\ R_{\mathbf{e}_j} & 0 \end{matrix} \right) \otimes E_{k-1} \right] \left( \begin{matrix} p \\ p \end{matrix} \right) \otimes (\mathbb{O}^2)^{\otimes (k-1)}
  = \left( \begin{matrix} p \mathbf{e}_j \\ p \mathbf{e}_j \end{matrix} \right) \otimes (\mathbb{O}^2)^{\otimes (k-1)}.
  \]
  Hence, the condition is satisfied in this case.

\item For the case when \(n = 8k + 1\) with \(k \in \mathbb{N}_+\), we define \(\eta\) as \(\mathbf{e}_0^{(1)} = (1,0,\ldots,0)\), which lies in the direct sum \(\mathbb{R} \oplus \mathbb{O}^{k}\). Consider the construction of \(\mathfrak{H}_0\) as follows:
\[
\mathfrak{H}_0 := 1 \otimes (\mathbb{O}^2)^{\otimes k}.
\]
Referring to Table \ref{Tab:1} for the definition of \(v_{8k+1}\), we obtain the transformation of \(\eta\) applied to \(\mathfrak{H}_0\):
\[
v_{8k+1}(\eta) \mathfrak{H}_0 = \left[ L_{\mathrm{i}} \otimes E_k \right] 1 \otimes (\mathbb{O}^2)^{\otimes k} = \mathrm{i} \otimes (\mathbb{O}^2)^{\otimes k},
\]
which leads to the conclusion that:
\[
\mathcal{R}_{8k+1} = \mathfrak{H}_0 \oplus \eta \mathfrak{H}_0.
\]
To verify that \(\mathbf{e}^{(j)}_i \mathfrak{H}_0 = \mathfrak{H}_0\) for all \(i = 0, \ldots, 7\) and \(j = 2, \ldots, k + 1\), we follow a computation analogous to the one described in Case  (1). This verification process affirms the validity of the structure for \(\mathcal{R}_{8k+1}\) and completes the proof for this particular case.

\item For the case where \(n = 8k + 2\) with \(k \in \mathbb{N}\), we define \(\eta\) as \(\mathbf{e}_0^{(1)} := ((1,0), 0, \ldots, 0)\), which resides in the space \(\mathbb{R}^2 \oplus \mathbb{O}^{k}\). Consider the module \(\mathfrak{H}_0\) defined by:
\[
\mathfrak{H}_0 = \mathbb{C}_{\mathrm{j}} \otimes (\mathbb{O}^2)^{\otimes k},
\]
where \(\mathbb{C}_{\mathrm{j}} = \{a + b\mathrm{j} \in \mathbb{H} \,|\, a,b \in \mathbb{R}\}\) denotes the \(j\)-complex plane within the quaternions \(\mathbb{H}\).

Referring to the definition of \(v_{8k+2}\) as outlined in Table \ref{Tab:1}, we find that the application of \(\eta\) to \(\mathfrak{H}_0\) yields:
\[
v_{8k+2}(\eta) \mathfrak{H}_0 = \left[ L_{\mathrm{i}} \otimes E_k \right] \mathbb{C}_{\mathrm{j}} \otimes (\mathbb{O}^2)^{\otimes k} = \mathrm{i} \mathbb{C}_{\mathrm{j}} \otimes (\mathbb{O}^2)^{\otimes k},
\]
which leads to the representation of \(\mathcal{R}_{8k+2}\) as:
\[
\mathcal{R}_{8k+2} = \mathfrak{H}_0 \oplus \eta \mathfrak{H}_0.
\]
Additionally, the transformation induced by \(v_{8k+2}(\mathbf{e}_1^{(1)})\) on \(\mathfrak{H}_0\) confirms that:
\[
v_{8k+2}(\mathbf{e}_1^{(1)}) \mathfrak{H}_0 = \left[ L_{\mathrm{j}} \otimes E_k \right] \mathbb{C}_{\mathrm{j}} \otimes (\mathbb{O}^2)^{\otimes k} = \mathfrak{H}_0.
\]
For any other vector \(\mathbf{e}^{(j)}_i\), with \(i = 0, \ldots, 7\) and \(j = 2, \ldots, k+1\), it is observed that these vectors act trivially on the \(\mathbb{C}_{\mathrm{j}}\) component, thereby preserving the decomposition of \(\mathfrak{H}_0\).

This analysis completes the proof for the case \(n = 8k + 2\), establishing the structured decomposition of \(\mathcal{R}_{8k+2}\).

  \item Consider the scenario where \(n = 8k + 3\), with \(k \in \mathbb{N}_+\). Our focus will be on proving properties related to the space \(\mathcal{R}_{8k+3}^+\), defined as \(\mathcal{R}_{8k+3}^+ = \mathbb{H}^+ \otimes (\mathbb{O}^2)^{\otimes k}\).

  Let us denote \(\eta = \mathbf{e}_1^{(1)} = (\mathrm{j}, 0, \ldots, 0)\) as an element in the space \(\text{Im}\mathbb{H} \oplus \mathbb{O}^k\), and consider the construction of
  \[
  \mathfrak{H}_0 := \left[1 \otimes \begin{pmatrix} \mathbb{O} \\ 0 \end{pmatrix} \oplus \mathrm{i} \otimes \begin{pmatrix} \mathbb{O} \\ 0 \end{pmatrix} \oplus \mathrm{j} \otimes \begin{pmatrix} 0 \\ \mathbb{O} \end{pmatrix} \oplus \mathrm{k} \otimes \begin{pmatrix} 0 \\ \mathbb{O} \end{pmatrix}\right] \otimes (\mathbb{O}^2)^{\otimes (k-1)}.
  \]

 Upon analyzing the effect of the operator \(v_{8k+3}\) on the element \(\eta\), as detailed in Table \ref{Tab:1}, we observe the following transformation of \(\mathfrak{H}_0\):
 \begin{eqnarray*}
 \hspace{30pt}	v_{8k+3}(\eta)\mathfrak{H}_0 &=& \left(R_{\mathrm{j}} \otimes E_k\right) \mathfrak{H}_0
 	\\&=& \left[ 1 \otimes \begin{pmatrix} 0 \\ \mathbb{O} \end{pmatrix} \oplus \mathrm{i} \otimes \begin{pmatrix} 0 \\ \mathbb{O} \end{pmatrix} \oplus \mathrm{j} \otimes \begin{pmatrix} \mathbb{O} \\ 0 \end{pmatrix} \oplus \mathrm{k} \otimes \begin{pmatrix} \mathbb{O} \\ 0 \end{pmatrix} \right] \otimes (\mathbb{O}^2)^{\otimes (k-1)}.
 \end{eqnarray*}

 This representation illustrates the intricate structure resulting from the application of \(v_{8k+3}\), which combines rotations and extensions in a higher-dimensional space, specifically impacting the composition of \(\mathfrak{H}_0\). The quaternionic components \(\mathrm{i}\), \(\mathrm{j}\), and \(\mathrm{k}\), along with the real unit \(1\), engage in forming a complex overlay of octonions (\(\mathbb{O}\)) across the \(k-1\) tensor products of \(\mathbb{O}^2\), highlighting the multi-faceted nature of this transformation.

  This leads us to conclude that \(\mathcal{R}_{8k+3}^+ = \mathfrak{H}_0 \oplus \eta\mathfrak{H}_0\). Further observations reveal that
  \[
  \eta\mathbf{e}^{(1)}_0\mathbf{e}^{(1)}_1 = \mathbf{e}_0 = \left(R_{\mathrm{i}} \otimes E_k\right), \quad \eta\mathbf{e}^{(1)}_0\mathbf{e}^{(1)}_2 = -1 \otimes E_k,
  \]
  and moreover,
  \begin{eqnarray*}
    \eta\mathbf{e}^{(1)}_0\mathbf{e}_0^{(2)} &=& R_{\mathrm{k}} \otimes \begin{pmatrix} 0 & 1 \\ -1 & 0 \end{pmatrix}\otimes E_{k-1} \\
    \eta\mathbf{e}^{(1)}_0\mathbf{e}^{(2)}_j &=& R_{\mathrm{k}} \otimes \begin{pmatrix} 0 & R_{\mathbf{e}_j} \\ R_{\mathbf{e}_j} & 0 \end{pmatrix}\otimes E_{k-1}, \quad j=1,\ldots,7.
  \end{eqnarray*}
  It is straightforward to verify that \(\eta\mathbf{e}^{(1)}_0\mathbf{e}_0^{(2)}\mathfrak{H}_0 = \mathfrak{H}_0\) and that \(\eta\mathbf{e}^{(1)}_0\mathbf{e}^{(2)}_j\eta\mathfrak{H}_0 = \mathfrak{H}_0\), for \(j=1,\ldots, n-1\).

Moreover, for any \(\mathbf{e}^{(j)}_i\), with \(i = 0, \ldots, 7\), and \(j = 3, \ldots, k+1\), their action on  \(\mathfrak{H}_0\), represented as
\[
\left[1 \otimes \begin{pmatrix} \mathbb{O} \\ 0 \end{pmatrix} \oplus \mathrm{i} \otimes \begin{pmatrix} \mathbb{O} \\ 0 \end{pmatrix} \oplus \mathrm{j} \otimes \begin{pmatrix} 0 \\ \mathbb{O} \end{pmatrix} \oplus \mathrm{k} \otimes \begin{pmatrix} 0 \\ \mathbb{O} \end{pmatrix}\right] \otimes \mathbb{O},
\]
is trivial, hence preserving the decomposition of \(\mathfrak{H}_0\).

In the special instance when \(n=3\), the previously defined \(\mathfrak{H}_0\) does not hold. For such a case, selecting \(\eta = \mathrm{j}\) and setting \(\mathfrak{H}_0\) to \(\mathbb{C}_{\mathrm{i}}\) are deemed appropriate, adapting the structure to suit the unique properties of this scenario.

  \item When $n=8k+4,k\in\mathbb{N}$,
  Set $\eta=\bfe_0^{(1)}=(1,0,\cdots,0)\in \mathbb{H}\oplus \mathbb{O}^k$,
  \begin{eqnarray*}
    \mathfrak{H}_0:=\mathbb{H}\left(
                                                                                                   \begin{array}{c}
                                                                                                     1 \\
                                                                                                     1 \\
                                                                                                   \end{array}
                                                                                                 \right)\otimes\left(\mathbb{O}^2\right)^{\otimes k}.
  \end{eqnarray*}
 By the definition of $v_{8k+4}$,
 \begin{eqnarray*}
v_{8k+4}(\eta)\mathfrak{H}_0 & = &\left[\left(\begin{matrix}
		0&1\\
		-1&0
	\end{matrix}\right)\otimes E_{k-1}\right]\left(\left(\begin{matrix}
		p\\
		p
	\end{matrix}\right)\otimes\left(\mathbb{O}^2\right)^{\otimes k}\right)\\
&=&\left(\begin{matrix}
		p\\
		-p
	\end{matrix}\right)\otimes\left(\mathbb{O}^2\right)^{\otimes k-1}=\mathbb{H}\left(\begin{matrix}
		1\\
		-1
	\end{matrix}\right)\otimes\left(\mathbb{O}^2\right)^{\otimes k}.
\end{eqnarray*}
 So we have $$\mathcal{R}_{8k+4}=\mathfrak{H}_0\oplus\eta\mathfrak{H}_0.$$
 Performing a computation analogous to that in Case (1) for each \(j=1,\ldots,3\), we evaluate the action of \(v_{8k+4}\) on \(\mathfrak{H}_0\) via \(\mathbf{e}_j^{(1)}\):
 \begin{equation*}
 	v_{8k+4}(\mathbf{e}_j^{(1)})\mathfrak{H}_0 = \left[\left(\begin{matrix}
 		0 & R_{\mathbf{e}_j} \\
 		R_{\mathbf{e}_j} & 0
 	\end{matrix}\right) \otimes E_{k-1}\right]\left(\begin{matrix}
 		p \\
 		p
 	\end{matrix}\right) \otimes \left(\mathbb{O}^2\right)^{\otimes k}
 	= \left(\begin{matrix}
 		p\mathbf{e}_j \\
 		p\mathbf{e}_j
 	\end{matrix}\right) \otimes \left(\mathbb{O}^2\right)^{\otimes k}.
 \end{equation*}
 This result completes the analysis for the specified case.

  \item  Consider the case when \(n = 8k + 5\) with \(k \in \mathbb{N}\). Let us set \(\eta = (0, 1, 0, \ldots, 0) = \mathbf{e}_0^{(2)}\) as an element within \(\mathbb{R} \oplus \mathbb{H} \oplus \mathbb{O}^k\). The subspace \(\mathfrak{H}_0\) is then defined as
  \[
  \mathfrak{H}_0 := \left[ \begin{pmatrix} \mathbb{H} \\ 0 \end{pmatrix} \otimes \begin{pmatrix} \mathbb{O} \\ 0 \end{pmatrix} \oplus \begin{pmatrix} 0 \\ \mathbb{H} \end{pmatrix} \otimes \begin{pmatrix} 0 \\ \mathbb{O} \end{pmatrix} \right] \otimes (\mathbb{O}^2)^{\otimes (k-1)}.
  \]
  Following the definition of \(v_{8k+5}\), the action on \(\eta\) yields
  \[
  v_{8k+5}(\eta)\mathfrak{H}_0 := \left[ \begin{pmatrix} 0 \\ \mathbb{H} \end{pmatrix} \otimes \begin{pmatrix} \mathbb{O} \\ 0 \end{pmatrix} \oplus \begin{pmatrix} \mathbb{H} \\ 0 \end{pmatrix} \otimes \begin{pmatrix} 0 \\ \mathbb{O} \end{pmatrix} \right] \otimes (\mathbb{O}^2)^{\otimes (k-1)}.
  \]
  This implies that
  \[
  \mathcal{R}_{8k+5} = \mathfrak{H}_0 \oplus \eta\mathfrak{H}_0.
  \]
  Furthermore, direct computation reveals that
  \[
  \eta\mathbf{e}_0^{(1)}\mathbf{e}_0^{(2)} = \mathbf{e}_0^{(1)} = \begin{pmatrix} L_{\mathrm{i}} & 0 \\ 0 & -L_{\mathrm{i}} \end{pmatrix} \otimes E_k
  \]
  and for \(j = 1, \ldots, 3\),
  \[
  \eta\mathbf{e}_0^{(1)}\mathbf{e}_j^{(2)} = \begin{pmatrix} L_{\mathrm{i}}R_{\mathbf{e}_j} & 0 \\ 0 & -L_{\mathrm{i}}R_{\mathbf{e}_j} \end{pmatrix} \otimes E_k,
  \]
  \[
  \eta\mathbf{e}_0^{(1)}\mathbf{e}_1^{(3)} = \begin{pmatrix} L_{\mathrm{i}} & 0 \\ 0 & -L_{\mathrm{i}} \end{pmatrix} \otimes \begin{pmatrix} 0 & 1 \\ -1 & 0 \end{pmatrix} \otimes E_{k-1},
  \]
  and for \(j = 1, \ldots, 7\),
  \[
  \eta\mathbf{e}_0^{(1)}\mathbf{e}_j^{(3)} = \begin{pmatrix} L_{\mathrm{i}} & 0 \\ 0 & -L_{\mathrm{i}} \end{pmatrix} \otimes \begin{pmatrix} 0 & R_{\mathbf{e}_j} \\ R_{\mathbf{e}_j} & 0 \end{pmatrix} \otimes E_{k-1}.
  \]

In the special case where \(n=5\), the previously established definition of \(\mathfrak{H}_0\) is not suitable. For this particular scenario, selecting \(\eta\) as \(\mathbf{e}_0^{(2)}\) and redefining \(\mathfrak{H}_0\) to be
\[
\mathfrak{H}_0 = \begin{pmatrix} \mathbb{H} \\ 0 \end{pmatrix}
\]
proves to be the appropriate adaptation. This choice aligns with the unique structural requirements and mathematical framework necessitated by the specific case of \(n=5\).

\end{enumerate}
This finishes the proof.

\end{proof}

\section{ Remarks about specific selection of the vector  $\eta$}

In the course of proving Theorem \ref{Thm:false}, it was established that $\mathfrak{H}_0$ must exhibit a $C\ell_{n-2}$ module structure. However, should we select $\eta = \mathbf{e}_0$, and the condition
\[
\mathbf{e}_j\mathfrak{H}_0 = \mathfrak{H}_0, \quad j = 1, \ldots, n-1,
\]
holds, then it necessitates that $\mathfrak{H}_0$ adopts a $C\ell_{n-1}$ module structure. This implication, when juxtaposed with the dimensional properties of the spinor spaces $\mathcal{R}_n$, suggests that
\[
\dim_{\mathbb{R}}\mathcal{R}_n \geqslant  2\dim_{\mathbb{R}}\mathcal{R}_{n-1}
\]
in these instances. More specifically,

\begin{theorem}\label{thm:noe0}
	When $n = 8k + 3$ or $8k + 5$, for any $k \in \mathbb{N}$, regardless of how we permute the order of $\mathbf{e}_j, j = 0, \ldots, n-1$, the vector $\eta$ cannot be chosen as $\mathbf{e}_0$.
\end{theorem}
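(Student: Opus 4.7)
The plan is to invoke Theorem~\ref{thm:redu-134} to translate the hypothesis $\eta = \mathbf{e}_0$ into a purely algebraic statement about $\mathfrak{H}_0$, and then to compare dimensions against the explicit data in Table~\ref{Tab:1}. Because permuting $\mathbf{e}_0, \ldots, \mathbf{e}_{n-1}$ amounts only to precomposing the embedding $v_n$ with an orthogonal automorphism of $\mathbb{R}^n$, it suffices to rule out $\eta = \mathbf{e}_0$ for one fixed labelling; the same verdict then carries over to every relabelling.

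First, I would assume toward a contradiction that for $n \in \{8k+3,\, 8k+5\}$ some subspace $\mathfrak{H}_0 \subseteq \mathcal{R}_n$ realises Gilbert's conjecture with $\eta = \mathbf{e}_0$. The last sentence of Theorem~\ref{thm:redu-134} supplies
$$\mathcal{R}_n = \mathfrak{H}_0 \oplus \mathbf{e}_0 \mathfrak{H}_0, \qquad \mathbf{e}_j \mathfrak{H}_0 = \mathfrak{H}_0 \text{ for every } j = 1, \ldots, n-1.$$
These stability relations promote $\mathfrak{H}_0$ to a module for the real subalgebra $\mathbb{A} \subseteq C\ell_n$ generated by $\mathbf{e}_1, \ldots, \mathbf{e}_{n-1}$. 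Since the canonical basis monomials of $C\ell_n$ are linearly independent, $\mathbb{A}$ has real dimension $2^{n-1}$, and together with the Clifford relation $\bigl(\sum_{j=1}^{n-1} x_j \mathbf{e}_j\bigr)^2 = -\|x\|^2$, Theorem~\ref{thm:construction} identifies $\mathbb{A} \cong C\ell_{n-1}$ (the collapsed case $n-1 \equiv 3 \pmod{4}$ does not intervene, since here $n-1 \in \{8k+2,\, 8k+4\}$).

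Next, I would convert this module structure into a dimensional obstruction. Semi-simplicity of $C\ell_{n-1}$ forces every nontrivial $C\ell_{n-1}$-module to have real dimension at least $\dim_{\mathbb{R}} \mathcal{R}_{n-1}$, hence $\dim_{\mathbb{R}} \mathfrak{H}_0 \geqslant \dim_{\mathbb{R}} \mathcal{R}_{n-1}$. Combined with $\dim_{\mathbb{R}} \mathfrak{H}_0 = \tfrac{1}{2} \dim_{\mathbb{R}} \mathcal{R}_n$, imposed by the direct-sum decomposition, this yields
$$\dim_{\mathbb{R}} \mathcal{R}_n \geqslant 2 \dim_{\mathbb{R}} \mathcal{R}_{n-1}.$$
A single glance at Table~\ref{Tab:1} then finishes the argument: one reads off $\dim_{\mathbb{R}} \mathcal{R}_{8k+3} = \dim_{\mathbb{R}} \mathcal{R}_{8k+2}$ and $\dim_{\mathbb{R}} \mathcal{R}_{8k+5} = \dim_{\mathbb{R}} \mathcal{R}_{8k+4}$, so the displayed inequality is violated in both residue classes, producing the desired contradiction. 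No individual step constitutes a real obstacle; the argument is essentially a sharpening of Theorem~\ref{Thm:false}, the sharpening being that the specific ansatz $\eta = \mathbf{e}_0$ reduces $\eta \mathbf{e}_0 \mathbf{e}_j$ to a scalar multiple of $\mathbf{e}_j$, thereby upgrading the module structure from $C\ell_{n-2}$ to $C\ell_{n-1}$ and doubling the dimensional lower bound.
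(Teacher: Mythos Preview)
Your argument is correct and follows exactly the route the paper sketches in the paragraph preceding Theorem~\ref{thm:noe0}: choosing $\eta=\mathbf{e}_0$ forces $\mathbf{e}_j\mathfrak{H}_0=\mathfrak{H}_0$ for $j=1,\ldots,n-1$, so $\mathfrak{H}_0$ carries a $C\ell_{n-1}$-module structure, whence $\dim_{\mathbb{R}}\mathcal{R}_n\geqslant 2\dim_{\mathbb{R}}\mathcal{R}_{n-1}$, which Table~\ref{Tab:1} rules out at $n=8k+3,8k+5$. Your write-up is in fact more careful than the paper's (you explicitly handle the permutation clause and check that the collapsed alternative in Theorem~\ref{thm:construction} does not occur), but the underlying idea is identical.
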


Nevertheless, for cases where $n = 8k + 3$ or $8k + 5$, $\mathfrak{H}_0$ is mandated to possess a $C\ell_{1,n-2}$ module structure. Here, $C\ell_{p,q}$ represents the Clifford algebra over the quadratic space $(\mathbb{R}^{p+q}, Q)$, defined by
\[
Q(x) = \sum_{i=1}^{p} x_i^2 - \sum_{j=1}^{q} x_{p+j}^2.
\]

It is noteworthy that the algebra generated by $\{\mathbf{e}_1\mathbf{e}_0\mathbf{e}_j, \quad j=1, \ldots, n-1\}$ equates to $C\ell_{1,n-2}$. Furthermore,
\[
C\ell_{1,1} \cong M(2, \mathbb{R}), \quad C\ell_{1,3} \cong M(4, \mathbb{R})
\]
(as detailed in \cite{LM89}), thereby implying that
\[
\dim_{\mathbb{R}}\mathcal{R}_{1,1} = 2=\frac{1}{2}\dim_{\mathbb{R}}\mathcal{R}_3, \quad \dim_{\mathbb{R}}\mathcal{R}_{1,3} = 4=\frac{1}{2}\dim_{\mathbb{R}}\mathcal{R}_5.
\]
This underpins the feasibility of selecting such an $\eta = \mathbf{e}_1$ and establishing the existence of $\mathfrak{H}_0$.

In conclusion, this paper highlights the variability in choosing $\eta$ and $\mathfrak{H}_0$. While proving Theorem \ref{Thm:true}, a particular selection was presented. For instance, in the scenario of $n = 8k + 5$, alternatives such as $\eta = \mathbf{e}_1^{(2)}$ or any other basis vector could be viable choices.

\section*{Conflict of interest statement}
We declare that we have no conflict of interest.

\bigskip\bigskip

\end{document}